\documentclass{article}

\addtolength{\textwidth}{3.4cm}
\addtolength{\hoffset}{-1.7cm}
\addtolength{\textheight}{3cm}
\addtolength{\voffset}{-1.7cm}

\usepackage[T1]{fontenc}
\usepackage[utf8]{inputenc}
\usepackage[english]{babel}

\usepackage{authblk}
\title{Controlling the time discretization bias \\for the supremum of Brownian Motion}
\author[1]{Krzysztof Bisewski\footnote{Email: bisewski@cwi.nl}}
\author[1,2]{Daan Crommelin}
\author[2]{Michel Mandjes}
\affil[1]{\small{Centrum Wiskunde \& Informatica, Amsterdam}}
\affil[2]{\small{Korteweg de Vries Institute for Mathematics, University of Amsterdam}}

\sloppy

\usepackage[absolute]{textpos}
\setlength{\TPHorizModule}{\paperwidth}
\setlength{\TPVertModule}{\paperheight}


\title{Controlling the time discretization bias \\for the supremum of Brownian Motion}

\date{September 13, 2017 \vspace{-.5cm}}

\usepackage{graphicx}
\usepackage{epstopdf}
\usepackage{enumerate}
\usepackage{amsthm}
\usepackage{amsmath}
\allowdisplaybreaks[1] 
\usepackage{amsfonts}
\usepackage{subcaption}
\usepackage{dsfont}
\usepackage[hidelinks]{hyperref}
\usepackage{chngpage}
\usepackage{float}
\usepackage{listings}
\usepackage{color}
\usepackage{xfrac}

\usepackage{array}
\usepackage{booktabs}
\setlength{\heavyrulewidth}{1.5pt}
\setlength{\abovetopsep}{4pt}

\DeclareMathOperator*{\argmax}{arg\,max}

\newtheorem{definition}{Definition}
\newtheorem{theorem}{Theorem}
\newtheorem{proposition}{Proposition}
\newtheorem{lemma}{Lemma}

\newtheorem{algorithm}{Algorithm}
\newtheorem{remark}{Remark}
\newtheorem{corollary}{Corollary}
\newtheorem{example}{Example}

\newcommand{\limb}{\lim_{b\to \infty}}
\newcommand{\ind}{\mathds{1}}
\newcommand{\R}{\mathbb{R}}
\newcommand{\N}{\mathbb{N}}
\newcommand{\PPP}{\mathbb{P}}

\newcommand{\Exp}{\mathbb{E}}

\newcommand{\Var}{\mathbb{V}\textnormal{\textrm{ar}}}


\newcommand{\red}[1]{#1}

\newcommand{\vb}{\vspace{3.2mm}}

\setlength\parindent{0pt}

\begin{document}
\maketitle

\begin{abstract}\noindent
We consider the bias arising from time discretization when estimating the threshold crossing probability $w(b) := \PPP(\sup_{t\in[0,1]} B_t > b)$, with $(B_t)_{t\in[0,1]}$  a standard Brownian Motion. We prove that if the discretization is equidistant, then to reach a given target value of the relative bias, the number of grid points has to grow quadratically in $b$, as $b$ grows. \red{When considering non-equidistant discretizations (with threshold-dependent grid points), we can substantially improve on this}: we show that for such grids the required number of grid points is independent of $b$, and in addition we point out how they can be used to construct a strongly efficient algorithm for the estimation of $w(b)$. \red{Finally, 
we show how to apply the resulting algorithm for a broad class of stochastic processes; 
it is empirically shown that the threshold-dependent grid significantly outperforms its
equidistant counterpart.}
\end{abstract}

\begin{textblock}{0.8}[0.5,0.5](0.535,0.97) \hfill\small{\emph{Submitted on March 17, 2017}} \end{textblock}

\section{Introduction}

Extreme values of random processes play a prominent role in a broad range of practical problems. It is often of interest to find the tail of the distribution of the supremum of a continuous-time \red{stochastic} process $(X_t)_{t\ge 0}$ over a finite time interval. \red{In this paper  the focus is on the level crossing probability \[w(b) := \PPP\left(\sup_{t\in[0,1]}X_t > b\right).\] For many classes of processes, such as the Gaussian processes \cite{adler1990introduction}, typically no explicit expressions for  $w(b)$ are available, with Brownian Motion and the Ornstein-Uhlenbeck process being notable exceptions.  When an explicit expression for $w(b)$ is unavailable one usually resorts to using high-dimensional numerical integration and simulation-based methods, see e.g.\ \cite{genz2009computation} for further reading.}\\

For most of the available numerical methods, the underlying \red{continuous-time} process needs to be discretized in time. One chooses a certain \textit{finite grid} $T \subset [0,1]$ and then approximates $w(b)$ with $w_T(b) := \PPP\big(\sup_{t\in T}X_t > b\big)$. We note that this always leads to an underestimation, i.e., $w_T(b) \leq w(b)$. We quantify this underestimation by $\beta_T(b) := (w(b)-w_T(b))/w(b)$, the relative \emph{discretization bias}\footnote{\red{As $b\to\infty$, both $w_T(b)$ and $w(b)$ tend to $0$, so that the \textit{absolute} bias is not a meaningful accuracy measure.}}. Typically $T$ is chosen to be an \textit{equidistant grid} $T = \{\frac{1}{n},\frac{2}{n},\ldots, 1\}$ and in that case, $\beta_T(b)$ can be reduced only by changing the \textit{grid size} $n$. The finer the grid, the smaller the bias, but also, the larger the computational effort to estimate $w_T(b)$. The main drawback of using equidistant grids is that typically, to reach a given target value of the discretization bias, the grid size $n$ has to grow with the threshold $b$. In that case, for large $b$, the appropriate grid size can become so large that the computation is not feasible. Two central questions arise from these observations: How fast does $n$ have to grow in $b$? Furthermore, can we identify a more efficient family of grids?

\vb

In this paper we address these issues for standard Brownian Motion. Although in this case $w(b)$ can be computed explicitly, there are no available expressions for $\beta_T(b)$. We conduct a thorough study of the influence of the choice of the grid on the corresponding relative bias. Furthermore, we argue that exploring the \red{case} of standard Brownian Motion is a first step towards finding efficient grids for \red{a more general class of processes. We demonstrate numerically how our analysis of efficient grids for Brownian Motion leads to a useful procedure to determine efficient grids for a broad range of other processes.}

The contributions of this paper are the following. \red{(i)~The first finding can be seen as a negative result}: we show that to \textit{uniformly control}\footnote{In this context \emph{uniform control} means that for a fixed $\varepsilon>0$, we have that $\beta_T(b) < \varepsilon$ for all $b>0$; the grid $T$ can change in $b$.} the relative bias, the size $n$ of the equidistant grid must grow at least quadratically in $b$; see Theorem \ref{thm_eq} in Section \ref{s:BM_equi}. 
\red{(ii)~The second finding is that we can do much better by using a \emph{\red{threshold-dependent}} family of grids}, meaning that grid points change their location with $b$ (but the number of points does not increase). The discretization bias induced by this particular family of grids is uniformly controlled without having to increase the number of grid points; see Theorem \ref{THEorem} in Section \ref{s:BM_bb}. According to the best of the authors' knowledge, this is the first result which shows that a careful choice of the grid can drastically increase the accuracy of the discrete estimator of $w(b)$. Using \red{threshold-dependent} grids makes it feasible to estimate $w(b)$ with moderate grid sizes even for very high thresholds $b$, which would be impossible to estimate using equidistant grids. In particular, in Section \ref{s:algorithm} we present a strongly efficient algorithm for the estimation of $w(b)$ that relies on \red{threshold-dependent} grids.
\red{(iii)~In the third place, we point out how the ideas underlying our threshold-dependent grid can be used for a broad class of stochastic processes (including Gaussian processes, such as fractional Brownian Motion, and L\'evy processes); 
it is empirically shown that the threshold-dependent grid significantly outperforms its
equidistant counterpart.}

\vb

An efficient grid (both small in size and inducing a small discretization bias) is particularly relevant for situations with large $b$. In this respect, the work presented here connects to the rare event simulation literature. As $b$ approaches infinity, $w(b)$ decays exponentially to $0$ and standard simulation-based methods like Crude Monte Carlo to estimate $w(b)$ become extremely time consuming. We emphasize that rare event simulation methods commonly aim to control the sampling error, not the bias due to the discretization. \cite{adler2012efficient} develop an algorithm that is strongly efficient (with bounded relative sampling error) for estimation of $w_T(b)$ (rather than $w(b)$). We will show that combining their algorithm with the use of \red{threshold-dependent} grids provides a strongly efficient algorithm for estimation of $w(b)$.

\vb

\red{
A topic closely related to ours concerns the quantification of the difference between the supremum of the stochastic process taken over $[0,1]$ and the supremum taken over a finite grid $T\subset [0,1]$, i.e. \[\Delta(T) = \sup_{t\in[0,1]} X_t - \sup_{t\in T} X_t.\] There are several results in the literature that study the behavior of $\Delta(T)$ for standard Brownian Motion. \cite{asmussen1995discretization} shown that for the equidistant grids $T^\text{eq}_n = \{\frac{1}{n},\ldots,\frac{n}{n}\}$, $\sqrt{n}\,\Delta(T^\text{eq}_n)$ has a tight, non-degenerate weak limit, as $n\to\infty$ and \cite{janssen2009equidistant} derived an expansion for $\Exp\Delta(T^\text{eq}_n)$. For \textit{random grids} $T^\text{rnd}_n = \{U_1,\ldots,U_n\}$, where $U_1,\ldots,U_n$ are i.i.d. uniform samples on $(0,1)$, independent of the Brownian Motion $(X_t)_{t\in[0,1]}$, \cite{calvin1997aaverage} establish the weak limit of $\sqrt{n}\,\Delta(T^\text{rnd}_n)$. Finally, \cite{calvin1997baverage} proposed a class of \textit{adaptive} grids, meaning that the consecutive grid-points $t_{k+1}$ are chosen based on $((t_1,B_{t_1}),\ldots,(t_{k},B_{t_k}))$; given any $\delta>0$, an adaptive grid $T^\delta_n = \{t_1^\delta,\ldots,t_n^\delta\}$ is provided such that $n^{1-\delta/2}\Delta(T^\delta_n)$ has a weak limit.

In our study we  do not focus on the {difference $\Delta(T)$ between the values} of the maxima of the discrete and continuous-time Brownian Motion, but rather on the $\beta_T(b)$, i.e., the relative {difference between the probabilities} that these maxima lie above a certain fixed threshold.
} 

\vb

There are several approaches to tackle the discretization bias available in the literature. Arguably, the most widely applicable method is \emph{Multilevel Monte Carlo} (MLMC) \cite{giles2008multilevel}. It can be applied together with any numerical method that relies on discretization. The idea is to use several different \textit{levels of discretization} and spend less computational effort (draw less samples) at the finest levels of discretization. MLMC effectively reduces the computational effort, and the time saved can be used to produce even finer levels of discretization. 
It could be interesting to explore the combination of MLMC method together with the idea of \red{threshold-dependent} grids but further exploiting this procedure lies beyond the scope of this article.

\vb

One of the methods that aims to directly decrease the bias \red{induced by equidistant grids} is \emph{continuity correction}. Since the discrete-time approximation $w_T(b)$ is always smaller than $w(b)$, one could \textit{slightly} lower the threshold $b$ to compensate for the underestimation. \red{\cite{broadie1997continuity}, using the machinery developed in \cite{siegmund1985sequential}, proposed a way of lowering the threshold which improves the rate of convergence of the relative bias from $O(n^{-1/2})$, cf.\ Proposition \ref{prop:eq}, to $O(n^{-1})$, as the number of grid points $n$ grows large. However, in the non-Brownian case, it remains a non-trivial problem how much $b$ should be decreased. In fact, there is no direct way of making sure whether lowering $b$ decreases the absolute relative bias, as lowering $b$ by \emph{too much} leads to overcompensation and thus to an estimate that is {\it larger} than $w(b)$.} By contrast, it is straightforward to compare the bias induced by two different grids --- the larger the discrete estimator $w_T(b)$, the smaller the relative bias.

There are also several simulation-based algorithms that do not rely on pre-discretization. \cite{li2015rare} propose a strongly efficient algorithm for estimation of $w(b)$ for a large class of Gaussian processes (most prominently, processes with constant variance function). However, when the underlying process has a unique point of maximal variance (such as Brownian Motion), the algorithm requires the simulation of a random time $\tau\in[0,1]$ from a density $f(t) \propto \PPP(X_t>b)$, which becomes a rare event simulation problem when $b$ is large. While for an arbitrary process, the random discretization proposed in the algorithm requires a computational effort cubic in the number of grid points (in order to simulate a discrete Gaussian path), pre-discretization requires only quadratic effort; see the discussion in Section \ref{s:algorithm}.

\vb

This paper is organized as follows. Section \ref{s:pre_res} provides definitions, preliminaries, and develops a general intuition. In Section \ref{s:BM_equi} we introduce useful upper and lower bounds for the discretization bias (see Lemma \ref{lem:BM_LB}) and show that the number of points on the equidistant grid has to grow quadratically in the threshold $b$ in order to uniformly control the discretization bias. In Section \ref{s:BM_bb}, as an alternative to equidistant grids, we study \red{threshold-dependent} grids, which control the relative bias with a constant grid size, independently of $b$. The proofs of all lemmas and a proposition are postponed to Section \ref{s:proofs}. In Section \ref{s:algorithm} we present an algorithm by \cite{adler2012efficient}, that we use throughout the paper for producing the numerical results; combining this algorithm with the use of \red{threshold-dependent} grids yields a strongly efficient algorithm for estimation of $w(b)$, see Corollary \ref{cor:strongly_efficient}. \red{In Section \ref{s:application} we apply threshold-dependent grids developed in previous section to stochastic processes other than Brownian Motion: Brownian Motion with jumps, Ornstein-Uhlenbeck process and fractional Brownian Motion.} Lastly, in Section \ref{s:discussion} we present concluding remarks and discuss some ideas for future research of \textit{optimal grids}. In the appendices we collect various technical results used throughout the paper.

\section{Preliminary results}\label{s:pre_res}

Let $(B_t)_{t\in [0,1]}$ be a standard Brownian Motion on the time interval $[0,1]$ with $B_0 = 0$. We consider the probability of crossing a positive threshold $b$, that is
\begin{align}\label{defi_wb}
w(b) := \PPP \bigg(\sup_{t\in [0,1]} B_t > b\bigg).
\end{align}
For a standard Brownian Motion, an explicit formula for the threshold-crossing probability (\ref{defi_wb}) is known, namely $w(b) = 2\,\PPP(B_1>b)$, which follows directly using the \emph{reflection principle} (see e.g. \cite{morters2010brownian}). Given a \textit{finite grid} $T$ we define a discrete-time approximation of $w(b)$:
\begin{align}\label{defi_wbT}
w_T(b) := \PPP \bigg(\sup_{t\in T} B_t > b\bigg),
\end{align}
where $T = \{t_1, \ldots, t_n\}$ is a finite subset of the interval $[0,1]$, ordered such that $t_1<\ldots<t_n$. As we are mostly interested in choosing the grid $T$ efficiently, we define the following performance measure.
\begin{definition}\label{def:rel_bias}
Let $T$ be a finite grid on $[0,1]$, then
\begin{align*}
\beta_T(b) := \frac{w(b) - w_T(b)}{w(b)} = \PPP\Big(\sup_{t\in T} B_t < b \ \big|  \sup_{t\in [0,1]} B_t > b \Big)
\end{align*}
is called the relative bias induced by the grid $T$.
\end{definition}
The second representation of relative bias in Definition \ref{def:rel_bias} is especially intuitive. It means that the relative bias is \emph{the probability that $B_t$ stays below $b$ on the grid $T$, given that its supremum over $[0,1]$ is greater than $b$}. Notice that any grid which includes the endpoint $t=1$ will induce a relative bias no greater than $\frac{1}{2}$. Indeed, if $1\in T$, then $w_T(b) = \PPP(\sup_{t\in T} B_t > b) \geq \PPP(B_1 > b)$ and thus
\begin{align*}
\beta_T(b) = 1 - \frac{w_T(b)}{w(b)} \leq 1 - \frac{\PPP\big(B_1 > b\big)}{2\,\PPP\big(B_1 > b\big)} = \frac{1}{2}.
\end{align*}

Our objective is to accurately estimate $w(b)$ using discrete approximations $w_T(b)$, in a computationally efficient manner. Brownian Motion has continuous paths and thus it is always possible for a given $b$ to find a fine enough grid to bound the bias up to a desired accuracy. However, the computational cost of estimating $w_T(b)$ grows in the grid size and thus it might be infeasible to numerically compute $w_T(b)$ for large grids.\\
At this point, we emphasize that we are not as much interested in the behaviour of $\beta_T(b)$ for a fixed $b$ or a fixed $n$ but rather in asymptotic regimes in which $b$ and/or $n$ approach infinity. For every $b$ we allow to use a different grid so it seems natural to treat the grid as a function of threshold. For every $b$ we define a collection of grids of all possible sizes $\{T_1(b), T_2(b), \ldots \}$, where $T_n(b)$ has $n$ elements, and we denote $\beta_n(b) := \beta_{T_n(b)}(b)$. For a given \textit{family of grids} we are interested in behavior of $\beta_n(b)$ as $n$ or $b$ tend to infinity. The most straightforward choice for the family of grids is the following.

\begin{definition}\label{def:equi_grid}
The family $\{T_n\}_{n\in\N}$, where $T_n := \{t^n_1,\ldots,t^n_n\}$ with $t^n_k:=\frac{k}{n}$ is called the equidistant family of grids.
\end{definition}

Notice that the location of grid points on the equidistant grid is independent of $b$. Since the distance between neighboring points is equal to $\frac{1}{n}$, and since Brownian paths are continuous, it follows that $\beta_n(b) \to 0$, as $n \to \infty$ for any fixed $b$.
It has been established in \cite{asmussen1995discretization} that for $T_n$, equidistant grid, the difference between the continuous-time and discrete-time supremum $\varepsilon_n = \sup_{t\in[0,1]}B_t-\sup_{t\in T_n} B_t$ is of order $n^{-1/2}$. More precisely, the sequence $(\sqrt{n}\varepsilon_n)_{n\in\N}$ has a tight and non-degenerate weak limit.
\begin{proposition}\label{prop:eq}
Let $(B_t)_{t\in[0,1]}$ denote standard Brownian Motion and $\{T_n\}_{n\in\N}$ be the equidistant family of grids from Definition $\ref{def:equi_grid}$ with $\beta_n(b) := \beta_{T_n}(b)$. For any threshold $b>0$ there exist positive constants $C_1,C_2$ such that
\begin{align*}
C_1\,n^{-1/2} \leq \beta_n(b) \leq C_2\,n^{-1/2}.
\end{align*}
\end{proposition}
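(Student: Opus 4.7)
The plan is to apply the strong Markov property at the first passage time $\tau := \inf\{t \geq 0 : B_t \geq b\}$ and reduce the relative bias to a persistence probability for a Gaussian random walk. Note that $\{\sup_{t\in[0,1]} B_t > b\} = \{\tau \leq 1\}$, with $B_\tau = b$ on that event. Fix a realisation with $\tau \in (t^n_k, t^n_{k+1}]$; path continuity forces $B_{t^n_j} < b$ for every $j \leq k$, so
\[
\bigl\{\sup\nolimits_{t \in T_n} B_t \leq b\bigr\} = \bigl\{B_{t^n_j} \leq b \text{ for all } j \geq k+1\bigr\}.
\]
By the strong Markov property, $W_u := B_{\tau+u} - b$, $u \ge 0$, is a standard Brownian Motion independent of $\mathcal F_\tau$; writing $\delta := t^n_{k+1} - \tau \in [0, 1/n)$ and $m := n - k - 1$, this yields
\[
\PPP\Bigl(\sup\nolimits_{t\in T_n} B_t \leq b \,\Big|\, \tau \Bigr) = \PPP\bigl(W_{\delta+j/n} \leq 0 \text{ for all } 0 \leq j \leq m\bigr).
\]

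Brownian scaling turns the right-hand side into a random walk persistence probability: setting $Z_j := \sqrt{n}\, W_{\delta+j/n}$, the sequence $(Z_j)_{j \geq 0}$ is a Gaussian random walk with i.i.d.\ standard normal increments whose starting value $Z_0$ is centred Gaussian with variance $n\delta \in [0,1]$. The crux is then the classical estimate $\PPP(Z_0 \leq 0, \ldots, Z_m \leq 0) \asymp m^{-1/2}$. The upper direction follows from the Sparre Andersen-type inequality $\PPP(\max_{j\leq m} S_j \leq y) \leq c(1+y)/\sqrt{m}$, valid for $y \geq 0$ and symmetric walks $(S_j)$ with standard normal increments, combined with $\Exp|Z_0| \leq \sqrt{2/\pi}$; the matching lower direction comes from restricting to the event $\{Z_0 \leq 0\}$, which has probability at least $1/2$ uniformly in $n$, and invoking $\PPP(\max_{j\leq m} S_j \leq 0) \sim (\pi m)^{-1/2}$.

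Finally, I would integrate this pointwise estimate against the conditional density of $\tau$ given $\tau \leq 1$, which equals (up to the normalisation $w(b)$) the inverse Gaussian density $s \mapsto (b/\sqrt{2\pi s^3})\, e^{-b^2/(2s)}$ on $(0,1]$, a function continuous and bounded away from $0$ and $\infty$ on any compact subinterval of $(0,1)$. Over the bulk region $s \in [\tfrac{1}{2}, 1 - n^{-1/2}]$, one has $m \asymp n(1-s)$, so $m \geq \sqrt{n}$, and the pointwise bound contributes a term of order $n^{-1/2}$ to $\beta_n(b)$; its lower estimate supplies the constant $C_1$. The main technical nuisance is the boundary strip $s \in [1 - n^{-1/2}, 1]$, where $m$ is too small for the persistence asymptotic to be applied directly; there I would bound the conditional probability trivially by $1$ and use the boundedness of the conditional density near $s=1$ to see that the contribution is itself $O(n^{-1/2})$, which is absorbed into $C_2$. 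The regime $s \downarrow 0$ causes no trouble because the inverse Gaussian density decays super-polynomially as $s \downarrow 0$.
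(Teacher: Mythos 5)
Your argument is correct, and it follows a genuinely different route from the paper's. The paper proves Proposition~\ref{prop:eq} by (i) using the discrete decomposition of Lemma~\ref{lem:BM_LB}, which conditions on the subinterval $(t_{j-1},t_j]$ containing $\tau_b$ and produces the quantities $a_j$ and $w_j(b)$; (ii) deliberately shifting the persistence probability to \emph{start at a grid point} — paying a harmless factor $\tfrac12$ in the lower bound and using the frozen time $t_{j-1}$ in the upper bound — so that only the $y=0$ Feller estimate (Lemma~\ref{lem_feller}) is needed; and (iii) bounding the $w_j(b)$ via a mean-value-theorem estimate (Result~\ref{appendix:results}.\ref{mvt_taub}) and a Riemann-sum comparison. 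You instead condition on $\tau$ exactly, apply the strong Markov property, and confront the resulting Gaussian walk with a random centred start $Z_0\sim N(0,n\delta)$, $n\delta\in[0,1]$, head on; this forces you to invoke the Sparre Andersen-type estimate $\PPP(\max_{j\le m}S_j\le y)\le c(1+y)/\sqrt m$ with a nonzero headroom $y$, a strictly stronger classical input than Feller's $y=0$ result. Your integration over the conditional density of $\tau$ is the continuous analogue of the paper's Riemann sum, and your treatment of the boundary strip near $s=1$ (trivial bound by $1$, density bounded near $1$) is cleaner than the paper's bookkeeping for the term $a_n w_n(b)$. The trade-off: your argument is more transparent and self-contained for this particular fixed-$b$ statement, while the paper's decomposition via Lemma~\ref{lem:BM_LB} is reused wholesale for Theorems~\ref{thm_eq} and \ref{THEorem}, which is why the authors set it up that way. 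Two minor points to watch if you write this out fully: the persistence constants in $\PPP(Z_0\le 0,\dots,Z_m\le 0)\asymp m^{-1/2}$ must be uniform over the variance $\sigma^2=n\delta\in[0,1]$ of $Z_0$ (your upper bound via $\Exp|Z_0|\le\sqrt{2/\pi}$ and your lower bound via $\PPP(Z_0\le0)\ge\tfrac12$ both deliver this, but it should be stated); and your split at $1-n^{-1/2}$ is more conservative than necessary — splitting at $1-1/n$ (so that $m\ge1$ on the bulk) already suffices since the two-sided bound holds for all $m\ge1$, not only asymptotically.
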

The proof of the Proposition \ref{prop:eq} is given in Section \ref{s:proofs}. The proof we give strongly resembles the proof of Theorem \ref{thm_eq} below in Section \ref{s:BM_equi}, but we remark that it is also possible to derive it using the tools developed in \cite{broadie1997continuity}.

Proposition \ref{prop:eq} states that $\beta_n(b)$ decays like $n^{-1/2}$, when $n$ grows large \emph{for a fixed $b$} but it does not describe the behavior of the relative bias when $b$ varies. In Theorem \ref{thm_eq} in the following section, we derive an upper bound for $\beta_n(b)$ for $n$ and $b$ simultaneously.

Figure \ref{fig:relative_bias_equi} shows the evolution of the relative bias for four different thresholds $b=5, 6, 7, 8$ against the size of the equidistant grid. Even though all four graphs show the $n^{-1/2}$ decay, the graphs rise up with growing threshold. In particular, for thresholds $b=5$ and $8$ respectively $n=700$ and $1700$ points are needed to arrive at around $10\%$ relative bias. It indicates that, as $b$ grows \emph{increasingly many grid-points are needed to arrive at the target relative bias}. Using the \red{threshold-dependent} grid that we develop in Section \ref{s:BM_bb} one can arrive at $10\%$ relative bias using approximately $n=100$ grid-points, independently of the value of the threshold. This amounts to a substantial improvement of the computational efficiency.

\begin{figure}[h]
 \begin{adjustwidth}{0cm}{}
        \centering
       \includegraphics[scale=.6]{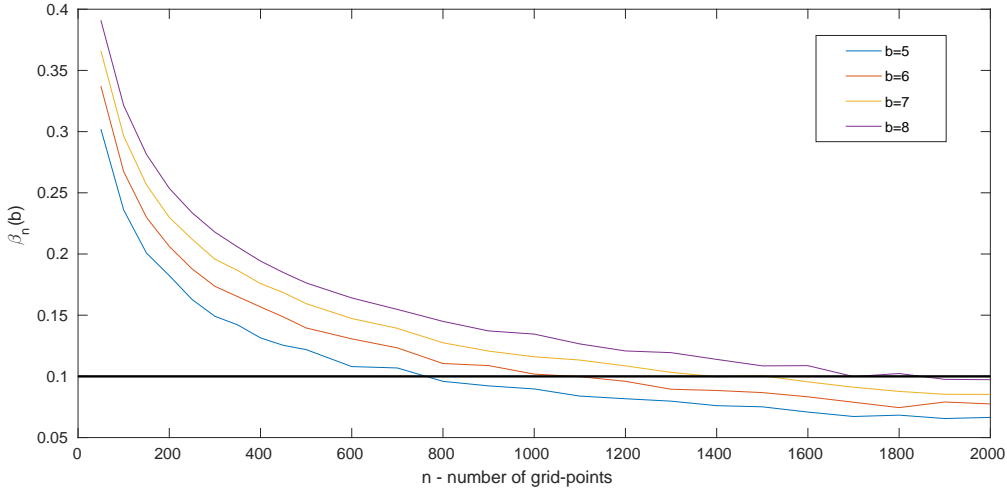}
      \caption{Plots of the relative bias $\beta_n(b)$ against the grid size $n$ for the equidistant family of grids for four different thresholds. The numerical results are computed using an algorithm described in Section \ref{s:algorithm}.
      }\label{fig:relative_bias_equi}
       \end{adjustwidth}
\end{figure}

In some cases, the equidistant family of grids is the best possible choice, in the sense that other grid families require at least equally fast asymptotic growth of $n$ as $b$ increases, in order to control the relative bias. \cite{adler2012efficient} prove that for \textit{centered, homogeneous and twice continuously differentiable (in a mean squared sense) Gaussian processes}, $n$ has to grow linearly in $b$ to uniformly control the relative bias. Moreover, if $n$ grows sublinearly in $b$, then the relative bias of any family of grids (not necessarily equidistant) tends to
its maximal value, as $b$ approaches infinity. It is noted, however, that Brownian Motion does not belong to the family of Gaussian processes for which the result of \cite{adler2012efficient} applies.\\

In the following two sections we analyze the asymptotic behavior of the relative bias $\beta_n(b)$ for two families of grids. We prove that the equidistant grid requires quadratic growth of $n$ in $b$ (see Theorem~\ref{thm_eq} in Section \ref{s:BM_equi}). As an alternative, we develop the \red{threshold-dependent} family of grids, for which we prove that the relative bias can be made arbitrarily small, uniformly in $b$ for fixed $n$ (see Theorem~\ref{THEorem} in Section~\ref{s:BM_bb}). We obtain a uniform rate of convergence in $n$ and also provide a closed-form expression for the \red{threshold-dependent} family of grids (see Definition (\ref{THE_grid1}) in Section \ref{s:BM_bb}).

\section{Equidistant family of grids for Brownian Motion}\label{s:BM_equi}

This section is devoted to analyzing the asymptotic behavior of the relative bias for the equidistant family of grids. The methodology developed in this section will be used later to prove Theorem \ref{THEorem}; in particular, the crucial part of the proof concerns bounds for the relative bias induced by an arbitrary finite grid, developed in Lemma \ref{lem:BM_LB}.\\

The following theorem describes the asymptotic behaviour of the relative bias, under the equidistant family of grids.
\begin{theorem}\label{thm_eq}
Let $(B_t)_{t\in[0,1]}$ denote standard Brownian Motion and $\{T_n\}_{n\in\N}$ be the equidistant family of grids from Definition \ref{def:equi_grid} with $\beta_n(b) := \beta_{T_n}(b)$.
\begin{enumerate}[(a)]
\item Let $b_0$ be any positive, real number. There exist positive constants $C_0,C_1$, independent of $b$ and $n$ such that
\begin{align*}
\beta_n(b) \leq C_0\cdot bn^{-1/2},
\end{align*}
for all $b \geq b_0$, and
\begin{align*}
\beta_n(b) \leq C_1 \cdot n^{-1/2},
\end{align*}
for all $b\in(0,b_0]$.
\item Let $m : (0,\infty) \to (0,\infty)$ be such that $\limb {m(b)}/{b^2} = 0$. Then, as $b\to\infty$,
\begin{align*}
\inf_{n \leq m(b)} \beta_n(b) \longrightarrow \frac{1}{2}.
\end{align*}
\end{enumerate}
\end{theorem}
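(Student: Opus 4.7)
The plan splits into parts (a) and (b).

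\textbf{Part (a).} Starting from Lemma \ref{lem:BM_LB}, which I expect to provide the upper bound
\begin{equation*}
w(b) - w_T(b) \leq \sum_{k=1}^n \PPP\Bigl(B_{t_{k-1}} < b,\, B_{t_k} < b,\, \sup_{t\in[t_{k-1},t_k]} B_t > b\Bigr)
\end{equation*}
for an arbitrary grid (with $t_0:=0$), each summand admits a clean expression via the strong Markov property applied at the first passage to $b$ inside $(t_{k-1},t_k]$ (after which, by symmetry, $B_{t_k}<b$ with probability $\tfrac{1}{2}$), combined with the reflection-principle formula for the one-sided passage probability. This yields
\begin{equation*}
I_k \;=\; \int_{-\infty}^b \phi_{t_{k-1}}(x)\,\bar{\Phi}\!\bigl((b-x)/\sqrt{h_k}\bigr)\,dx,
\end{equation*}
where $h_k:=t_k-t_{k-1}$ and $\phi_t$ denotes the $N(0,t)$ density. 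I would then specialize to $h_k=1/n$ and use the Gaussian identity by which the product $\phi_{t_{k-1}}(x)\phi((b-x)/\sqrt{h_k})$ collapses to $\phi(b/\sqrt{t_k})/\sqrt{n t_k}$ times a Gaussian density in $x$ centred at $bt_{k-1}/t_k$ with variance $t_{k-1}/(nt_k)$; bounding $\bar{\Phi}$ by Mill's ratio $\phi(u)/u$ for $u\geq 1$ and by $\tfrac{1}{2}$ for $u<1$, summing over $k$, and comparing with $w(b)=2\bar{\Phi}(b)\sim 2\phi(b)/b$ (Mill's lower bound, $b\geq b_0$) should produce $\beta_n(b) \leq C_0\, b/\sqrt{n}$. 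The range $b\in(0,b_0]$ follows from Proposition \ref{prop:eq} together with boundedness of its implicit constant on the compact interval.

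\textbf{Part (b).} The upper bound $\beta_n(b) \leq \tfrac{1}{2}$ is immediate since $1 \in T_n$, so only a matching asymptotic lower bound is required. Set $\tau := \inf\{t : B_t = b\}$. I would restrict the event $\{\sup_{t\in T_n} B_t < b,\, \sup_{t\in[0,1]} B_t > b\}$ to the sub-event $\{\tau \in (t_{n-1},1]\}$, on which the constraints $B_{t_j}<b$ for $j<n$ hold automatically (because $t_j < t_{n-1} < \tau$), leaving only $B_1 < b$ to impose. This gives
\begin{equation*}
\beta_n(b)\,w(b) \;\geq\; \PPP(\tau\leq 1,\, B_1<b) \;-\; \PPP(\tau\leq t_{n-1},\, B_1<b).
\end{equation*}
The reflection principle identifies $\PPP(\tau\leq 1, B_1<b) = w(b) - \PPP(B_1>b) = w(b)/2$, while the second term is at most $\PPP(\tau\leq t_{n-1}) = 2\bar{\Phi}(b\sqrt{n/(n-1)})$. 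Dividing through by $w(b)=2\bar{\Phi}(b)$ gives
\begin{equation*}
\beta_n(b) \;\geq\; \tfrac{1}{2} - \bar{\Phi}(b\sqrt{n/(n-1)})/\bar{\Phi}(b),
\end{equation*}
and Mill's ratio shows this tail ratio is bounded by a constant times $\exp(-b^2/(2(n-1)))$ (up to polynomial factors). The hypothesis $m(b)/b^2\to 0$ yields $b^2/n \geq b^2/m(b) \to \infty$ uniformly for $n\leq m(b)$, so the tail ratio vanishes uniformly and $\inf_{n\leq m(b)}\beta_n(b) \to \tfrac{1}{2}$.

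\textbf{Main obstacle.} The delicate step is the uniform control of $\sum_k I_k$ in part (a) as both $b$ and $n$ vary. In particular, for $k$ close to $n$ the Gaussian centre $bt_{k-1}/t_k$ approaches $b$ and the Mill's ratio bound becomes singular, so a careful case split over $k$ with different estimates in different regimes will be needed to secure a total of order $\phi(b)/\sqrt{n}$. Part (b), by contrast, is a direct reflection-principle calculation whose only analytic input is the exponential decay in Mill's ratio.
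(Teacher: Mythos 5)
You have guessed the content of Lemma~\ref{lem:BM_LB} incorrectly, and the version you wrote down is too weak to prove the theorem. Your bound is a union bound
\[
w(b)-w_T(b) \;\leq\; \sum_{k=1}^n \PPP\bigl(B_{t_{k-1}}<b,\, B_{t_k}<b,\, \textstyle\sup_{[t_{k-1},t_k]}B>b\bigr) \;=\; \sum_k I_k,
\]
which retains only the constraint at the two endpoints $t_{k-1},t_k$ of the interval in which the crossing occurs. But the left-hand side is the probability of an event where the path stays below $b$ at \emph{all} grid points; discarding the constraints at $t_{k+1},\dots,t_n$ causes catastrophic double-counting. One can compute that $I_k \approx \phi_{t_{k-1}}(b)/\sqrt{2\pi n}$, so that
\[
\sum_k I_k \;\approx\; \frac{1}{\sqrt{2\pi n}}\sum_{k=1}^n \phi_{(k-1)/n}(b) \;\approx\; \sqrt{\frac{n}{2\pi}}\int_0^1 \phi_t(b)\,dt \;\sim\; \frac{2\sqrt{n}\,\phi(b)}{b^2\sqrt{2\pi}},
\]
and hence $\sum_k I_k / w(b) \sim \sqrt{n}/(b\sqrt{2\pi})$, which \emph{grows} like $\sqrt{n}$ rather than decaying like $n^{-1/2}$. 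In other words, $\sum_k I_k$ is the expected number of intervals on which the path dips up past $b$ with both endpoints below, and this count scales like $\sqrt{n}$ (related to the local time of $B$ at level $b$). No case split over $k$ will fix this; the bound is structurally insufficient. The actual Lemma~\ref{lem:BM_LB} keeps the constraints at \emph{all} remaining grid points: conditioned on the first passage $\tau_b\in(t_{j-1},t_j]$, the event $\{\sup_T B < b\}$ requires $B_{t_j}<b,\dots,B_{t_n}<b$, which by the strong Markov property becomes a ``stay negative for $n-j+1$ steps'' probability $a_j$. The indispensable ingredient is then Lemma~\ref{lem_feller}, the classical random-walk estimate $a_j \asymp (n-j+1)^{-1/2}$, and it is precisely this decay of $a_j$ (absent from your bound, where the analogous factor is a flat $\tfrac12$) that produces the $n^{-1/2}$ rate when combined with the mean-value-theorem bound on the first-passage weights $w_j(b)$. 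The Gaussian manipulations you describe (product of Gaussian densities, Mill's ratio) are otherwise sound and could be recycled for estimating the $w_j(b)$, but the $a_j$-decay is the missing idea. Your treatment of the range $b\in(0,b_0]$ via Proposition~\ref{prop:eq} is also circular, since the paper proves Proposition~\ref{prop:eq} \emph{from} Theorem~\ref{thm_eq}(a); the paper instead establishes the small-$b$ regime via a monotonicity-in-$b$ argument (Lemma~\ref{equi_grid_increasing_b}).

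\textbf{Part (b) is correct} and is essentially the paper's argument: restrict to the sub-event $\{\tau_b\in(t_{n-1},1],\,B_1<b\}$, apply the reflection principle to identify $\PPP(\tau_b\leq 1, B_1<b)=w(b)/2$, and bound the tail ratio $\bar\Phi(b\sqrt{n/(n-1)})/\bar\Phi(b)$ via Mill's ratio. Your constant is slightly worse than the paper's (you discard the extra factor $\tfrac12$ that the paper retains by conditioning at the first passage, giving $\bar\Phi(b\sqrt{n/(n-1)})/\bar\Phi(b)$ instead of $\tfrac12\,\bar\Phi(b\sqrt{n/(n-1)})/\bar\Phi(b)$), but since this term vanishes as $b\to\infty$ the difference is immaterial, and your uniformity-in-$n$ observation ($b^2/(n-1)\geq b^2/m(b)\to\infty$) closes the argument correctly.
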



\red{
Part (a) of Theorem \ref{thm_eq} states that $\beta_n(b) \leq C_0\, bn^{-1/2}$, so that in order to bound $\beta_n(b)$ uniformly in $b$ it suffices to take $n = O(b^2)$. The second part of the Theorem \ref{thm_eq} states that if $n=o(b^2)$ then $\beta_n(b)\to 1/2$, meaning that the relative bias cannot be bounded by an arbitrarily small number. Together, the two parts entail that the growth $n=O(b^2)$ is sufficient and there is no better (slower) growth which would guarantee a uniformly bounded relative bias.

\vb
}

The crucial part of the proof of Theorem \ref{thm_eq} is the method of bounding the relative bias. Since no explicit expressions for $w_T(b)$ or $\beta_T(b)$ are known (even if $T$ is an equidistant grid) we develop a general upper bound for $\beta_T(b)$ in the following lemma, in which we use the quantities
\red{\begin{align*}a_j(b) &:= \PPP\big(B_{t_j(b)-t_{j-1}(b)} < 0, \ldots, B_{t_n(b)-t_{j-1}(b)} < 0\big),\:\:\:\:
a_{n+1}(b):=\sfrac{1}{2}, \\
w_j(b)& := \PPP\big(\tau_b\in (t_{j-1}(b),t_j(b)] \ \big| \ \tau_b\in(0,1]\big),\\
\tau_b &:= \inf\{t\geq 0 : B_t > b \}. 
\end{align*}Notice that in this definition of $a_j(b)$ and $w_j(b)$ we allow grid points $t_1,\ldots,t_n$ to change their location with $b$. In the present section, which is on equidistant grids, the grid points obviously do not depend on $b$, but in later sections they do.}

\begin{lemma}\label{lem:BM_LB}
Let $T(b) = \{t_1(b), \ldots, t_n(b)\}\subset[0,1]$, where $0<t_1(b)<\ldots < t_n(b)\leq 1$, and let $t_0(b) = 0$. The following lower and upper bounds for $\beta_T(b)$ apply:
\begin{align*}
\underbar{$\beta$}_T(b) \leq \beta_T(b) \leq  \bar\beta_T(b)
\end{align*}
with
\begin{align*}
\underbar{$\beta$}_T(b) := \frac{1}{2}\sum_{j=1}^{n} a_{j+1}(b) \, w_j(b), \ \ \ \ \ \bar\beta_T(b) := \sum_{j=1}^n a_j(b) \, w_j(b).
\end{align*}
\end{lemma}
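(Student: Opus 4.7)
My plan is to partition the first-passage time according to which grid interval contains it, apply the strong Markov property at $\tau_b$, and then sandwich the resulting conditional probability by a simple monotonicity argument. I will assume, as will be the case for all grids used later in the paper, that $t_n(b) = 1$, so that the intervals $(t_{j-1}(b), t_j(b)]$, $j=1,\dots,n$, partition $(0,1]$.

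First, writing $\beta_T(b) = \PPP\bigl(\sup_{t\in T(b)}B_t<b\,\big|\,\tau_b\le 1\bigr)$ and conditioning on the piece of the partition that contains $\tau_b$,
\[\beta_T(b) = \sum_{j=1}^n \PPP\bigl(B_{t_k(b)} < b \text{ for } k=j,\ldots,n \,\big|\, \tau_b\in(t_{j-1}(b),t_j(b)]\bigr)\, w_j(b),\]
since on $\{\tau_b > t_{j-1}(b)\}$ the inequalities $B_{t_k(b)}<b$ for $k<j$ hold automatically. By the strong Markov property, $\widetilde B_s := B_{\tau_b+s}-b$ is a standard Brownian motion independent of $\tau_b$, so for $\sigma\in(t_{j-1}(b),t_j(b)]$ the conditional probability above (call it $f_j(\sigma)$) equals
\[f_j(\sigma) = \PPP\bigl(\widetilde B_{t_k(b)-\sigma}<0,\ k=j,\ldots,n\bigr).\]

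Next, I show that $\tfrac12 a_{j+1}(b) \le f_j(\sigma) \le a_j(b)$ on $(t_{j-1}(b),t_j(b)]$. Set $s_j = t_j(b)-\sigma\in[0, t_j(b)-t_{j-1}(b))$ and $\delta_k = t_k(b)-t_j(b)$ for $k\ge j$, so that $t_k(b)-\sigma = s_j+\delta_k$. Conditioning on $\widetilde B_{s_j}=y$ and using the Markov property again, with $W'$ an independent Brownian motion and $h(y) := \PPP\bigl(W'_{\delta_{j+1}} < -y,\ldots, W'_{\delta_n} < -y\bigr)$, the substitution $y=\sqrt{s_j}\,z$ gives
\[f_j(\sigma) = \int_{-\infty}^0 h(y)\,\phi_{s_j}(y)\,dy = \int_{-\infty}^0 h(\sqrt{s_j}\,z)\,\phi(z)\,dz.\]
The function $h$ is manifestly decreasing. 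Hence for $z\le 0$ we have $h(\sqrt{s_j}\,z)\ge h(0) = a_{j+1}(b)$, giving the lower bound $f_j(\sigma)\ge \tfrac12 a_{j+1}(b)$; and since $s_j \mapsto h(\sqrt{s_j}\,z)$ is increasing for every $z<0$, $f_j$ is increasing in $s_j$ and therefore bounded above by its limiting value at $s_j=t_j(b)-t_{j-1}(b)$, which is precisely $a_j(b)$. The edge case $j=n$ (where $h\equiv 1$ since the product is empty) is dealt with by the convention $a_{n+1}(b)=\tfrac12$: indeed $f_n(\sigma)=\tfrac12$, which sits between $\tfrac12 a_{n+1}(b)=\tfrac14$ and $a_n(b)=\tfrac12$.

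Finally, integrating the pointwise bounds against the conditional law of $\tau_b$ on each sub-interval and summing over $j$ yields $\underbar{$\beta$}_T(b) \le \beta_T(b) \le \bar\beta_T(b)$. The main obstacle is the monotonicity of $f_j$ in $s_j$ for the upper bound: this is not immediate because reducing $\sigma$ simultaneously perturbs every argument $t_k(b)-\sigma$. The trick is that after conditioning on $\widetilde B_{s_j}$ only the increments $\delta_k=t_k(b)-t_j(b)$, which are $\sigma$-independent, enter $h$; the Brownian scaling $y=\sqrt{s_j}\,z$ then isolates the $s_j$-dependence inside the argument of a single monotone function, and the inequality becomes transparent.
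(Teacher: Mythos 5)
Your proof is correct and takes essentially the same route as the paper: partition on which grid interval contains $\tau_b$, apply the strong Markov property at $\tau_b$, and sandwich $\PPP\big(\widetilde B_{t_k-\sigma}<0,\ k\geq j\big)$ uniformly over $\sigma\in(t_{j-1},t_j]$. The paper simply delegates the two required inequalities to appendix results (Transformation T\ref{pb} for the upper bound, Result \ref{appendix:results}.\ref{grid_ineq} for the lower), whose proofs are exactly your inline computation of conditioning on $\widetilde B_{s_j}$ followed by the scaling substitution $y=\sqrt{s_j}\,z$.
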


A short proof of Lemma \ref{lem:BM_LB} is included in Section \ref{s:proofs}. The bounds consist of elements of two types: $a_j(b)$, the probability that $B_t$ stays negative at times $t_j-t_{j-1}, \ldots, t_n-t_{j-1}$, and $w_j(b)$, the probability that $B_t$ hits $b$ for the first time in the interval $[t_{j-1},t_j]$ given that its supremum over $[0,1]$ is greater than $b$.\\

For a general grid \red{$T(b)$}, the \red{probabilities $a_j(b)$} are difficult to control. However, when $T(b)$ is equidistant \red{(thus independent of $b$)}, then also the probabilities $a_j$ are independent of $b$; \red{we emphasize this independence by writing $a_j$ instead of $a_j(b)$ throughout this section.} As a result,  there exists a tight asymptotic bound for them (see Lemma \ref{lem_feller} below); we were inspired to look into such quantities while reading \cite[Section 5]{morters2010brownian}. \red{The probabilities $w_j(b)$} are controlled using a mean value theorem, see Appendix \ref{appendix:results}.\ref{mvt_taub}.

\begin{lemma}\label{lem_feller}
There exist constants $C^*_1, C^*_2 > 0$ such that:
\begin{align*}
C^*_1 n^{-1/2} \leq \PPP\Big(B_1 > 0, \ldots, B_n > 0\Big) \leq C^*_2 n^{-1/2}
\end{align*}
for all $n\in\N$.
\end{lemma}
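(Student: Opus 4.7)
The plan is to reduce the statement to a classical random walk problem and then invoke either Sparre Andersen's theorem or a direct generating function argument. Observe that if we set $X_k := B_k - B_{k-1}$, then the increments $X_1, X_2, \ldots$ are i.i.d.\ standard normal random variables and $B_k = X_1 + \cdots + X_k$. Thus the event $\{B_1 > 0, \ldots, B_n > 0\}$ is exactly the event that the random walk $S_k := X_1+\cdots+X_k$ stays strictly positive up to time $n$. This is the point on which the author's hint to \cite{morters2010brownian} is likely based.

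Next, I would compute $\PPP(S_1 > 0, \ldots, S_n > 0)$ explicitly. Since the increment distribution is continuous and symmetric, $\PPP(S_k = 0) = 0$ and $\PPP(S_k \leq 0) = \tfrac{1}{2}$ for every $k \geq 1$. Sparre Andersen's identity then gives the generating function
\begin{align*}
\sum_{n=0}^{\infty} \PPP(S_1 > 0, \ldots, S_n > 0)\, z^n \;=\; \exp\!\left(\sum_{n=1}^{\infty} \frac{\PPP(S_n > 0)}{n}\, z^n\right) \;=\; (1-z)^{-1/2}.
\end{align*}
Extracting coefficients via the standard binomial expansion of $(1-z)^{-1/2}$ yields the closed form
\begin{align*}
\PPP(B_1 > 0, \ldots, B_n > 0) \;=\; \binom{2n}{n} 4^{-n}.
\end{align*}
(Alternatively, the same identity is obtained from the cycle/Spitzer–Sparre Andersen combinatorial argument, or can be quoted directly from Feller's book.)

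Finally, Stirling's formula gives $\binom{2n}{n} 4^{-n} \sim (\pi n)^{-1/2}$ as $n\to\infty$. Since the expression $\binom{2n}{n} 4^{-n}$ is strictly positive for every finite $n$, its ratio to $n^{-1/2}$ is continuous and bounded away from $0$ and $\infty$ on $\{1,2,\ldots\}\cup\{\infty\}$; hence there exist $C_1^*, C_2^* > 0$ such that $C_1^* n^{-1/2} \leq \binom{2n}{n} 4^{-n} \leq C_2^* n^{-1/2}$ for all $n\in\N$, proving the lemma. I do not anticipate a substantive obstacle: the only points of care are (i) verifying that the hypotheses of Sparre Andersen apply (symmetry and continuity of the increment law, both immediate for standard normals), and (ii) recording that strict and non-strict inequalities coincide on the event under consideration because $\PPP(B_k = 0) = 0$.
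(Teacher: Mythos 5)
Your proof is correct and follows the same route that the paper itself outsources to a citation: the paper simply refers the reader to Feller (Vol.~II, Sections XII.7--8) for the symmetric--random-walk result, which is exactly the Sparre Andersen / Wiener--Hopf material you reconstruct here. Your derivation is in fact slightly stronger, yielding the closed form $\PPP(B_1>0,\ldots,B_n>0)=\binom{2n}{n}4^{-n}$, and your care with the strict-versus-nonstrict distinction and the reduction to $\PPP(S_n>0)=\tfrac12$ (via continuity and symmetry of the Gaussian increment law) covers the only hypotheses that need to be checked.
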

In fact, the assertion in Lemma \ref{lem_feller} is true for any symmetric random walk; \red{see \cite[Theorem~4 in Section XII.7, and Lemma~1 in Section XII.8]{feller1971introduction}}. Before proving Theorem \ref{thm_eq} we present one more lemma.
\begin{lemma}\label{equi_grid_increasing_b}
Let $T = \{t_1, \ldots, t_n\}$ be such that $t_k = \frac{k}{n}$ and let $t_0 = 0$. Then the upper bound $\bar\beta_T(b)$ developed in Lemma \ref{lem:BM_LB}  is an increasing function of $b$.
\end{lemma}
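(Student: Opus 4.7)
The plan is to recognise $\bar\beta_T(b)$ as the conditional expectation of a non-decreasing step function applied to the first-passage time $\tau_b$, and then apply a monotone-likelihood-ratio (MLR) argument to the family of hitting-time distributions. Set
\[
h(t):=\sum_{j=1}^{n} a_j\,\ind_{(t_{j-1},t_j]}(t),\qquad t\in(0,1].
\]
From the definitions of $a_j$ and $w_j(b)$ one gets immediately that
\[
\bar\beta_T(b)=\sum_{j=1}^n a_j\,w_j(b)=\Exp\big[h(\tau_b)\,\big|\,\tau_b\leq 1\big].
\]
For the equidistant grid, $t_k-t_{k-1}=1/n$, and Brownian scaling gives $a_j=\PPP(B_1<0,\ldots,B_{n-j+1}<0)$, which is non-decreasing in $j$ because increasing $j$ simply removes constraints; hence $h$ is non-decreasing on $(0,1]$. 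Note that the values $a_j$ do not depend on $b$ since the equidistant grid itself does not.

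The second step is to show that the conditional distribution of $\tau_b$ given $\{\tau_b\leq 1\}$ is stochastically non-decreasing in $b$. Using the explicit inverse-Gaussian density
\[
f_{\tau_b}(t)=\frac{b}{\sqrt{2\pi t^3}}\exp\!\Big(-\frac{b^2}{2t}\Big),\qquad t>0,
\]
a direct computation yields, for $b_1<b_2$,
\[
\frac{f_{\tau_{b_2}}(t)}{f_{\tau_{b_1}}(t)}=\frac{b_2}{b_1}\exp\!\Big(-\frac{b_2^2-b_1^2}{2t}\Big),
\]
which is strictly increasing in $t$, so the family $\{f_{\tau_b}\}_{b>0}$ has the MLR property on $(0,\infty)$. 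Restricting attention to the event $\{\tau_b\leq 1\}$ only multiplies each density by a $t$-independent normalising constant, so MLR persists for the conditional laws on $(0,1]$; the standard implication MLR $\Rightarrow$ first-order stochastic dominance then gives $(\tau_{b_2}\,|\,\tau_{b_2}\leq 1)\succeq_{\mathrm{st}}(\tau_{b_1}\,|\,\tau_{b_1}\leq 1)$.

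Combining the two ingredients, because $h$ is non-decreasing and the conditional law of $\tau_b$ increases stochastically in $b$, the quantity $\Exp[h(\tau_b)\,|\,\tau_b\leq 1]=\bar\beta_T(b)$ is non-decreasing in $b$. The only step requiring genuine computation is verifying monotonicity of the likelihood ratio in $t$, but this drops out of the inverse-Gaussian density in one line; the monotonicity of $a_j$ in $j$ is a direct consequence of the equidistant spacing. Hence no step appears to be a serious obstacle, and the proof reduces to an MLR computation followed by a standard stochastic-dominance argument.
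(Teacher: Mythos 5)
Your proposal is correct, and it captures the same underlying phenomenon as the paper: the coefficients $a_j$ are non\-decreasing in $j$ for the equidistant grid, and the conditional law of $\tau_b$ given $\{\tau_b\le 1\}$ shifts probability mass towards $t=1$ as $b$ grows. The difference is in the packaging. The paper's proof establishes a single\-crossing property of the weights $w_j(\cdot)$ by examining the sign of $f(b_1,t)-f(b_2,t)$ (which reduces to the monotonicity of the ratio $e^{(b_2^2-b_1^2)/(2t)}\,b_1\Phi(-b_2)/(b_2\Phi(-b_1))$, i.e.\ the same likelihood ratio you compute) and then performs an Abel--summation argument to transfer this to the weighted sum $\sum_j a_j w_j(b)$. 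You instead identify $\bar\beta_T(b)=\Exp[h(\tau_b)\mid\tau_b\le 1]$ with $h$ a non\-decreasing step function, compute the likelihood ratio of the inverse\-Gaussian densities directly, note that conditioning on $\{\tau_b\le1\}$ preserves MLR, and invoke the standard chain $\textnormal{MLR}\Rightarrow\textnormal{FOSD}\Rightarrow$ monotone expectations. This is cleaner and more modular: the paper's single\-crossing plus Abel summation is exactly a hand\-rolled proof of the FOSD implication for step functions, so your route replaces that ad\-hoc computation with a well\-known theorem. One small cosmetic point: the lemma is stated as ``increasing'' and the paper's argument yields strict increase; your MLR is strictly monotone and $a_1<\cdots<a_n$ strictly, so strictness is available too, but you only claim non\-decreasing. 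Since the downstream use (bounding $\beta_T(b)\le\bar\beta_T(b_0)$ for $b\le b_0$) needs only the weak inequality, this is immaterial.
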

An important implication of Lemma \ref{equi_grid_increasing_b} is that for any $b_0>0$ we have that $\beta_T(b) \leq \bar\beta_T(b) \leq \bar\beta_T(b_0)$ uniformly for all $b\leq b_0$, which completely covers the statement on the situation that $b\leq b_0$ in part~(a) of Theorem \ref{thm_eq}. The proof of Lemma \ref{equi_grid_increasing_b} is given in Section \ref{s:proofs}.

\begin{proof}[Proof of Theorem \ref{thm_eq}(a)]
Thanks to Lemma \ref{equi_grid_increasing_b} it suffices to prove the first part of Theorem \ref{thm_eq}(a), i.e. we assume that $b\geq b_0$. Without loss of generality we put $b_0 = 1$. Exploiting the upper bound developed in Lemma \ref{lem:BM_LB} we decompose the sum $\sum_{j=1}^n a_j\cdot w_j(b)$ into three parts, which we treat separately:
\begin{align}\label{to_bound_thm_eq}
\beta_{n}(b) & \leq a_1\cdot w_1(b) + \sum_{j=2}^{n-1} a_j\cdot w_j(b) + a_n\cdot w_n(b),
\end{align}
Using the definition of the equidistant grid and the scaling property of Brownian motion we can see that $a_j = \PPP\big( B_{t_j-t_{j-1}} < 0, \ldots, B_{t_n-t_{j-1}} < 0 \big) = \PPP\big( B_1 < 0, \ldots, B_{n-j+1} < 0 \big)$ and the bound in Lemma \ref{lem_feller} yields $a_j \leq C^*_2 (n-j+1)^{-1/2}$ for all $j\in\{1,\ldots,n\}$. Since all $w_j(b)\leq 1$, we thus have a straightforward bound for the first term in (\ref{to_bound_thm_eq}):
\begin{align*}
a_1\cdot w_1(b) \leq C^*_2 \,n^{-1/2}
\end{align*}
The second term we bound in the following fashion, relying on the \red{upper bound} that we have for $w_j(b)$ (stated in Result B.V),
\begin{align}
\nonumber \sum_{j=2}^{n-1} a_j\cdot w_j(b) & \leq \sum_{j=2}^{n-1} C_2^*\,(n-j+1)^{-1/2} \cdot \frac{b(b+\sqrt{b^2+4})}{4} \frac{\sqrt{n}}{(j-1)^{3/2}} e^{-\frac{b^2}{2} \cdot \left( \frac{n}{j} - 1\right)} \\
\label{rieman_summ_nice} & \leq C_1 \cdot bn^{-1/2} \cdot \sum_{j=2}^{n-1} \frac{1}{n} \cdot \left( \frac{b}{\sqrt{1-\frac{j}{n}}} \cdot \left(\frac{j}{n}\right)^{-3/2} e^{-\frac{b^2}{2} \cdot \left( \frac{n}{j} - 1\right)} \right) \\
\label{rieman_b0small} & \leq C_1 \cdot bn^{-1/2} \cdot \int_0^1 \frac{b}{\sqrt{1-x}} \cdot x^{-3/2} \cdot e^{-\frac{b^2}{2}\, (1/x - 1)}\,dx \\
\nonumber & \leq C_1 \cdot bn^{-1/2},
\end{align}
where $C_2^*$ comes from Lemma \ref{lem_feller} and $C_1$ is a positive constant, independent of $b$ and $n$. 
To arrive at~(\ref{rieman_summ_nice}) we use that $2(j-1) \geq j$ for all relevant $j$. In the transition from (\ref{rieman_summ_nice}) to (\ref{rieman_b0small}) we use the definition of the Riemann sum for the function 
\[f(b,x) := \frac{b}{\sqrt{1-x}}x^{-3/2}e^{-\frac{b^2}{2}(1/x-1)};\] note that, since $f(b,x)$ is an increasing function of $x$ when $b\geq 1$ (see Result \ref{appendix:results}.\ref{res_increasing} in the Appendix), the Riemann sum in (\ref{rieman_summ_nice}) underestimates the integral, i.e., $\sum_{j=2}^{n} \frac{1}{n}f(b,\frac{j-1}{n}) \leq \int_0^1 f(b,x)\,dx = \sqrt{2\pi}$.\\
Lastly, since $a_n = \PPP(B_{t_n}<0) = \frac{1}{2}$ we have
\begin{align*}
a_n\cdot w_n(b) \leq \frac{1}{2} \cdot \frac{b(b+\sqrt{b^2+4})}{4} \frac{\sqrt{n}}{(n-1)^{3/2}} \leq C_2 \,\frac{b^2}{n},
\end{align*}
where $C_2$ is a positive constant independent of $n$ and $b$. Since $w_n(b) \leq 1$ this results in \[a_n\cdot w_n(b) \leq \min \left\{ C_2 \,\frac{b^2}{n},\frac{1}{2}\right\} \leq \sqrt{\min \left\{ C_2 \,\frac{b^2}{n},\frac{1}{2}\right\}} \leq \sqrt{C_2}\,bn^{-1/2}.\] 
Combining the above bounds,
\begin{align*}
\beta_{n}(b) & \leq a_1\cdot w_1(b) + \sum_{j=2}^{n-1} a_j\cdot w_j(b) + a_n\cdot w_n(b) \leq C_2^* n^{-1/2} + C_1 \,bn^{-1/2} + \sqrt{C_2} \,bn^{-1/2}\\
& \leq C_0 \, bn^{-1/2},
\end{align*}
where $C_0$ is a positive constant independent of $b$ and $n$. This concludes the proof.
\end{proof}

\begin{proof}[Proof of Theorem \ref{thm_eq}(b)] Without loss of generality we can assume $m(b)\to\infty$ as $b\to\infty$. Similar to the proof of Lemma \ref{lem:BM_LB} in Section \ref{s:proofs} we obtain:
\begin{align}
\nonumber w(b)\,\beta_T(b) & = \PPP\Big(\sup_{t\in T} B_t < b, \sup_{t\in [0,1]} B_t > b \Big) = \PPP\Big(\sup_{t\in T} B_t < b, \tau_b\in[0,1] \Big) \\
\nonumber & = \sum_{j=1}^n \PPP\Big(\sup_{t\in \{t_j, \ldots, t_{n}\}} B_t < b, \tau_b\in(t_{j-1},t_j] \Big) \geq \PPP\Big( B_{t_n} < b, \tau_b\in(t_{n-1},t_n] \Big)\\
\nonumber & = \int_{t_{n-1}}^{t_n} \PPP(B_{t_n} < b | B_s = b) \PPP(\tau_b\in ds) = \frac{1}{2}\,\PPP(\tau_b\in(t_{n-1},t_n))
\end{align}
Dividing both sides of the inequality by $w(b)$ yields an elementary lower bound on $\beta_T(b)$:
\begin{align}\label{very_simple_lower_bound}
\beta_T(b) & \geq \frac{1}{2}\,\PPP\big(\tau_b\in (t_{n-1},t_n] \ \big| \ \tau_b\in(0,1]\big) = \frac{1}{2} \cdot \frac{\Phi(-b/\sqrt{t_n}) - \Phi(-b/\sqrt{t_{n-1}})}{\Phi(-b)}
\end{align}
\red{where $\Phi(\cdot)$ denotes the standard normal cdf, and we use the fact that $\PPP(\tau_b\leq t) = 2\,\PPP(B_t>b)$.} In our case $t_n = 1$ and $t_{n-1} = \frac{n-1}{n} \leq \frac{m-1}{m}$, so that \red{due to the monotonicity of $\Phi(\cdot)$}
\begin{align}\label{ineq_funna}
\inf_{n\leq m(b)}\beta_n(b) \geq \frac{1}{2} - \frac{1}{2} \frac{\Phi(-b/\sqrt{(m-1)/m})}{\Phi(-b)} \, .
\end{align}
Taking the limit $b\to\infty$ on both sides of inequality (\ref{ineq_funna}) yields:
\begin{align}
\limb \nonumber \inf_{n\leq m(b)}\beta_n(b) & \geq \frac{1}{2} - \frac{1}{2}\limb
 \frac{\Phi(-b/\sqrt{(m-1)/m})}{\Phi(-b)} \\
\label{using_some_limit_result} & = \frac{1}{2} - \frac{1}{2}\limb \frac{\frac{\sqrt{(m-1)/m}}{b}\phi(b/\sqrt{(m-1)/m})}{\frac{1}{b}\phi(b)} \\
\nonumber & = \frac{1}{2} - \frac{1}{2}\limb e^{-b^2/(2(m-1))} = \frac{1}{2}
\end{align}
where \red{$\phi(\cdot)$ denotes the standard normal pdf}, we use result \ref{appendix:results}.\ref{limit1} in (\ref{using_some_limit_result}), and the last equality is a consequence of the assumption that $\limb {m(b)}/{b^2} = 0$.
\end{proof}

In this section we have proven that in order to uniformly control the relative bias, the size of the equidistant grid must grow at least quadratically in $b$, as $b$ approaches infinity. In the next section we present a \red{threshold-dependent} grid, which yields a uniform bound on the relative bias using a grid of given size. In other words, in order to control the relative bias with increasing $b$, instead of adding more and more points to the grid, it suffices to suitably shift their location.

\section{\red{Threshold-dependent} grids for Brownian Motion}\label{s:BM_bb}

In this section we prove the main result of the paper. We explicitly present a \red{threshold-dependent} family of grids which uniformly (in $b$) bounds the relative bias.\\
Before we introduce the result, we give some intuition why it is possible to control the relative bias as $b$ grows, without increasing $n$. \red{Firstly, for any given  $\varepsilon>0$, we have that \[\PPP(\sup_{t\in[0,1-\varepsilon]}B_t>b) = 2\,\PPP(B_{1-\varepsilon}>b) = o(w(b)),\]as $b\to\infty$. Therefore,} \[\red{\PPP\Big(\sup_{t\in[1-\varepsilon,1]} B_t > b \,\Big| \sup_{t\in[0,1]} B_t > b\Big) \longrightarrow 1, \ \text{ as } b \to \infty.}\]
It means that with growing $b$, the `hitting of the threshold' occurs closer and closer to  time $t=1$. It indicates that the grid points should be gradually shifted towards the point $t=1$, as $b$ is increasing. Moreover, the result in Theorem \ref{thm_eq} indicates how fast the points should be shifted. It states that for the family of equidistant grids, the uniform bound on the bias is achieved if the number of grid points grows quadratically in $b$. Equivalently, the distances between neighboring points are decreasing proportionally to $b^{-2}$. It turns out that this is indeed the pace at which the points should be shifted towards $t=1$.

In the following result,  $\Phi(\cdot)$ and $\Phi^{-1}(\cdot)$ denote the standard normal cdf and its inverse, respectively. 
\begin{theorem}\label{THEorem}
Let $(B_t)_{t\in[0,1]}$ be a standard Brownian Motion. Fix $b_0 > 0$ and let $\{T_n(b)\}_{n\in\N,b>0}$ be a family of grids such that $T_n(b) = \{t^n_1(b), \ldots, t^n_n(b)\}$; here $t^n_k(b) := \frac{k}{n}$ for $b\leq b_0$, and
\begin{align}\label{THE_grid1}
t^n_k(b) := \Bigg(\frac{b}{\Phi^{-1}\left(\frac{k}{n}\,\Phi(-b)\right)} \Bigg)^{2},
\end{align}
for $b>b_0$. Denote $\beta_n(b) := \beta_{T_n(b)}(b)$. There exists a positive $C$, independent of $b$ and $n$, such that
\begin{align*}
\beta_n(b) \leq C\,n^{-1/4}
\end{align*}
for all $b>0$.
\end{theorem}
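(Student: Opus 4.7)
The plan is to adapt the strategy of the proof of Theorem~\ref{thm_eq}(a): bound $\beta_n(b)$ by $\frac{1}{n}\sum_j a_j(b)$ via Lemma~\ref{lem:BM_LB}, then control the sum using the structure of the threshold-dependent grid. For $b \leq b_0$ the grid is equidistant, so Theorem~\ref{thm_eq}(a) yields $\beta_n(b) \leq C n^{-1/2} \leq C n^{-1/4}$ directly; the focus is thus on $b > b_0$.

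For $b > b_0$, the first step is to verify that the grid construction forces $w_j(b) = 1/n$ for every $j$. By the reflection principle $\PPP(\tau_b \leq t) = 2\Phi(-b/\sqrt t)$, and substituting the grid formula yields
\begin{equation*}
\PPP(\tau_b \leq t^n_k(b)) = 2\Phi\bigl(\Phi^{-1}((k/n)\Phi(-b))\bigr) = (k/n)\,w(b),
\end{equation*}
so that the conditional law of $\tau_b$ on $(0,1]$ places mass $1/n$ on each grid interval. Applying the upper bound in Lemma~\ref{lem:BM_LB} reduces everything to establishing
\begin{equation*}
\sum_{j=1}^n a_j(b) \leq C\, n^{3/4}\quad \text{uniformly in } b > b_0.
\end{equation*}

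To control this sum, I would exploit two features of the grid. First, by Brownian scaling, $a_j(b)$ depends only on the normalised spacings $(t^n_k(b) - t^n_{j-1}(b))/(1 - t^n_{j-1}(b))$; using the asymptotic $\Phi^{-1}((k/n)\Phi(-b)) \sim -\sqrt{b^2 + 2\log(n/k)}$, for $j \geq 2$ these spacings tend as $b \to \infty$ to the $b$-free quantities $\log(k/(j-1))/\log(n/(j-1))$, providing leverage for a uniform-in-$b$ bound. Second, I would combine the crude bound $a_j(b) \leq 1/2$ with a refined pairwise (arcsine-law) estimate $a_j(b) \leq \tfrac14 + \tfrac{1}{2\pi}\arcsin\!\sqrt{(t^n_j - t^n_{j-1})/(1 - t^n_{j-1})}$, splitting the summation at a cutoff $J \asymp n^{3/4}$ and applying each bound in its effective range.

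The main obstacle is producing a pointwise bound on $a_j(b)$ that is sharp enough to give a total sum of $O(n^{3/4})$. For the equidistant grid Lemma~\ref{lem_feller} yields $a_j \lesssim (n-j+1)^{-1/2}$ essentially for free, leading to $\sum a_j = O(\sqrt n)$. For the threshold-dependent grid, however, the grid points cluster logarithmically near $t=1$ (tightly so when $b$ is large), which keeps correlations between the $B_{t_k}$'s close to $1$ and degrades the decay of $a_j(b)$. The proof must extract enough decay from this logarithmic spacing to reach the $n^{3/4}$ sum bound, at the cost of the $n^{1/4}$ factor (versus the $n^{1/2}$ in the equidistant case) that is reflected in the theorem's rate $n^{-1/4}$.
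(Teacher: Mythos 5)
Your setup is sound and matches the paper up to the point of reducing the task to showing $\sum_{j=1}^n a_j(b) \leq C\,n^{3/4}$ uniformly in $b > b_0$: the treatment of $b \leq b_0$, the observation that $w_j(b) = 1/n$ by design, the Brownian-scaling normalisation of $a_j(b)$, and the asymptotic $t^n_k(b) \approx 1 - 2\log(n/k)/b^2$ leading to the $b$-free limiting spacing ratios $\log(k/(j-1))/\log(n/(j-1))$ are all correct (and the last ratio is precisely what appears as the lower bound in the paper's Lemma~\ref{lemming}(a)).

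The gap is in the bound you propose for $a_j(b)$. Discarding all but two of the constraints and invoking the arcsine law gives $a_j(b) \leq \tfrac14 + \tfrac{1}{2\pi}\arcsin\sqrt{(t^n_j-t^n_{j-1})/(1-t^n_{j-1})}$, but this quantity is bounded below by $\tfrac14$ for every $j$, and the crude bound $\tfrac12$ is of course also bounded below by $\tfrac14$. Consequently any combination of these two estimates, with whatever cutoff $J$, gives $\sum_{j=1}^n a_j(b) \geq n/4$, which is $\Theta(n)$ and cannot be $O(n^{3/4})$. To reach a sublinear sum you must use a bound on $a_j(b)$ that actually tends to zero for most $j$, and a two-point (pairwise) reduction cannot supply that: you need to retain enough of the $n-j+1$ constraints to see genuine decay.

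This is exactly what the paper's Lemma~\ref{lemma2_me} accomplishes. Via a sequence of grid transformations (translation, contraction, deletion; see Appendix~\ref{appendix:grid_transformations}), it shows
\[
\PPP\big(B_{t_1}>0,\ldots,B_{t_n}>0\big) \ \leq\ \PPP\big(B_1>0,\ldots,B_N>0\big)
\quad\text{for any } N \leq \Big(\tfrac{t_n}{\max_k(t_k-t_{k-1})}\Big)^{1/2},
\]
i.e.\ it compares a non-equidistant $n$-point orthant probability to an \emph{equidistant} $N$-point one, which by Lemma~\ref{lem_feller} is $\asymp N^{-1/2}$. Applied to the tail $\{t_j-t_{j-1},\ldots,t_n-t_{j-1}\}$ and combined with Lemma~\ref{lemming}, this yields $a_j(b) \lesssim \widetilde N_n(j)^{-1/4}$ with $\widetilde N_n(j) = \log(n/(j-1))/\log(j/(j-1))$, which does go to zero as $n/j \to \infty$, and summing (via a Riemann-sum comparison using $\log(1+x)\leq x$) gives $O(n^{3/4})$. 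The square root in the admissible $N$ is precisely where the exponent $-1/4$ (rather than $-1/2$) originates, as the paper acknowledges in Remark~\ref{remark1}. The missing ingredient in your argument is, therefore, a many-point comparison lemma of this type; the pairwise arcsine estimate is too weak in principle, not merely in the constants.
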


We emphasize that the bound for the relative bias $\beta_n(b)$ developed above does not depend on the threshold $b$ and thus holds \emph{uniformly}, for all $b$. Figure \ref{fig:relative_bias_equi_vs_adapt} shows the comparison between the relative bias of the equidistant and the \red{threshold-dependent} grid, both of size $n=100$. The bias induced by the \red{threshold-dependent} grid remains uniformly bounded (by circa $0.1$), while the former tends to $0.5$, the worst possible relative bias, cf.\ Theorem \ref{thm_eq}, part~(b).\\

\begin{figure}[h]
 \begin{adjustwidth}{0cm}{}
        \centering
       \includegraphics[scale=.8]{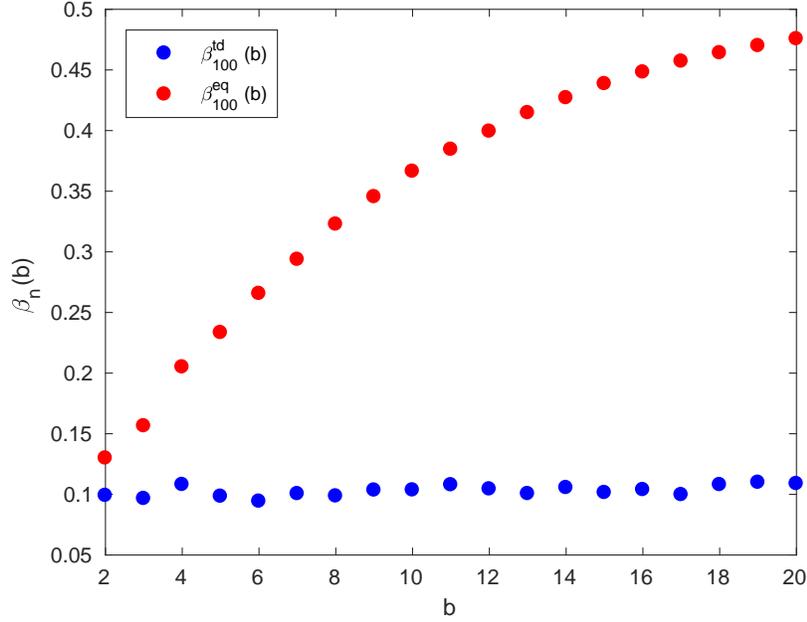}
      \caption{A plot of the relative bias of the equidistant grid $\beta_{100}^{\text{eq}}(b)$ and the \red{threshold-dependent} grid \red{$\beta_{100}^{\text{td}}(b)$}, both with fixed grid size $n=100$, as a function of $b$. Notice that $\beta_{100}^\text{eq}(b)$ tends to 0.5 with growing $b$ (the largest possible bias), while \red{$\beta_{100}^\text{td}(b)$} remains bounded by about $0.1$. The numerical results are computed with the algorithm discussed in Section \ref{s:algorithm}. The relative error due to finite sample size is negligible (smaller than 0.006).}\label{fig:relative_bias_equi_vs_adapt}
       \end{adjustwidth}
\end{figure}

Notice that for small $b$, $\{T_n(b)\}_{n\in\N,b\in(0,b_0]}$ in Theorem \ref{THEorem} is identical to the equidistant family of grids. In fact this is exactly the setting of \red{the second part of the Theorem \ref{thm_eq}(a)}. The real contribution of Theorem \ref{THEorem} is the regime when $b>b_0$. The grid defined in (\ref{THE_grid1}) is the unique solution to the set of equations
\begin{align}\label{THE_grid2}
\PPP\Big(\tau_b \in(t^n_{k-1}(b),t^n_k(b)] \ \Big| \ \tau_b \in (0,1]\Big) = \frac{1}{n}
\end{align}
for all $k \in \{1, \ldots, n\}$ and $t_0 := 0$. To see this, we sum up the first $k$ equations in (\ref{THE_grid2}) and obtain an explicit equation for $t_k^n(b)$:
\begin{align}\label{THE_grid3}
\PPP\Big(\tau_b \in(0,t^n_k(b)] \ \Big| \ \tau_b \in (0,1]\Big) = \frac{k}{n}.
\end{align}
\red{Since for Brownian Motion it holds that 
\[\PPP(\tau_b \in(0,t^n_k(b)]) = 2\,\PPP(B_{t^n_k(b)}>b) = 2\,\Phi(-b/\sqrt{t^n_k(b)}),\] and in particular $\PPP(\tau_b \in (0,1]) = 2\,\PPP(B_1>b) = 2\,\Phi(-b)$, Eqn.\ (\ref{THE_grid3}) can be equivalently expressed as
\begin{align}\label{THE_grid4}
\frac{\PPP(B_{t^n_k(b)}>b)}{\PPP(B_1>b)} = \frac{k}{n}
\end{align}
or, in terms of the cdf $\Phi(\cdot)$,
\begin{align*}
\Phi\left(-b/\sqrt{t^n_k(b)}\right) = \frac{k}{n}\Phi(-b).
\end{align*}
}

\noindent
Finally, after taking the inverse $\Phi^{-1}(\cdot)$ from both sides of the equation above we see that $t_k^n(b)$ satisfies (\ref{THE_grid1}). Figure \ref{fig:grid_evolution} shows the placement of the grid-points on the grid $T_{5}(b)$, as defined in (\ref{THE_grid1}), for increasing $b$. In fact, one can prove that
\begin{align}\label{limit_adaptive_grid}
b^2\big(1-t^n_k(b)\big) \xrightarrow{b\to\infty} -2\log(k/n)
\end{align}
and thus \[t^n_k(b)\approx 1 - \frac{2\log(n/k)}{b^2}\] for large $b$. It means that the points of the grid (\ref{THE_grid1}) are clustered around $t=1$, with distances between the points proportional to $b^{-2}$. Here we see \red{an important connection with Theorem \ref{thm_eq}(a)}, where the distances between grid-points decrease at the same pace, as already mentioned in the opening paragraph of this section.

\begin{figure}[h]
 \begin{adjustwidth}{0cm}{}
        \centering
       \includegraphics[scale=.7]{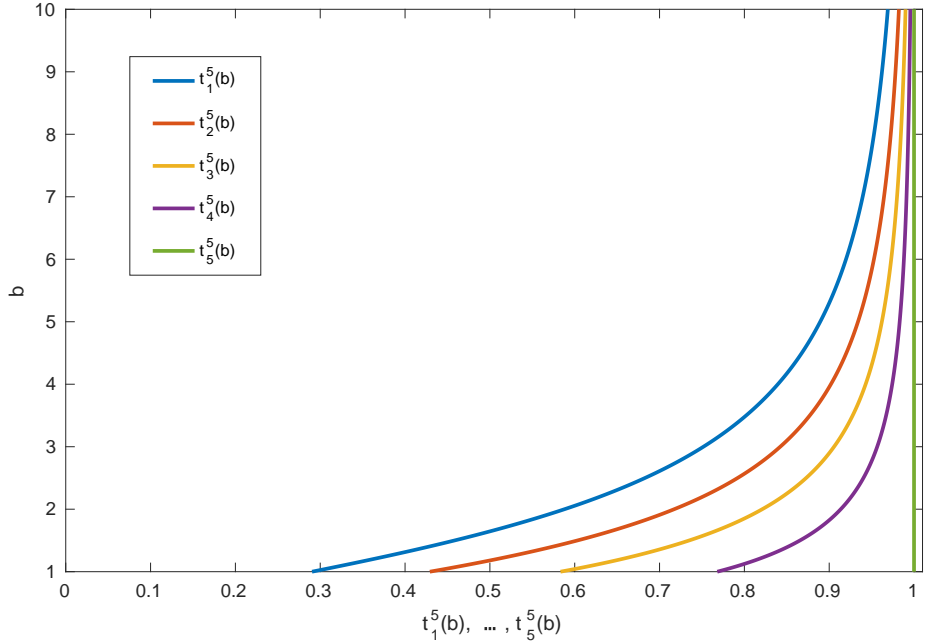}
      \caption{Location of the grid-points $t^{5}_1(b), \ldots, t^{5}_{5}(b)$ defined in (\ref{THE_grid1}) with increasing threshold $b$. Note that with growing $b$ all the points are gradually shifted towards the end-point $t=1$.}\label{fig:grid_evolution}
       \end{adjustwidth}
\end{figure}

For $b>b_0$, the points $t^n_1(b), \ldots, t^n_n(b)$ of the \red{threshold-dependent} grid (\ref{THE_grid1}) do not coincide with the equidistant grid, entailing that we can not directly use Lemma \ref{lem_feller} to control the terms of type $a_j(b)$ in the upper bound developed in Lemma \ref{lem:BM_LB} in Section \ref{s:BM_equi}. The following lemma resolves this issue.

\begin{lemma}\label{lemma2_me}
Let $t_0 = 0 < t_1 < t_2 < \ldots < t_n < \infty$. Then
\begin{align*}
\PPP\Big(B_{t_1} > 0, \ldots, B_{t_n} > 0\Big) \leq \PPP\Big(B_1 > 0, \ldots, B_N > 0\Big),
\end{align*}
for any $N \leq N_n$, where
\[N_n:=\left(\frac{t_n}{\max_{k=1,\ldots,n}(t_k - t_{k-1})}\right)^{1/2}.\]
\end{lemma}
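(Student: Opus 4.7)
My plan is a two-step reduction: first shrink the event by keeping only $N$ of the original $n$ constraints (which only enlarges it and hence increases the probability), then compare the resulting $N$-dimensional Gaussian orthant probability to the equispaced one via Slepian's lemma. The content of the lemma is in choosing the subsequence so that Slepian's hypothesis is actually satisfied, and the appearance of a square root in $N_n = (t_n/\Delta)^{1/2}$ is a direct consequence of this choice.

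Concretely, write $\Delta := \max_{k=1,\ldots,n}(t_k - t_{k-1})$ and define $k_i := \min\{k : t_k \geq i^2 \Delta\}$ for $i = 1, \ldots, N$. The assumption $N \leq N_n$ ensures every $k_i \in \{1, \ldots, n\}$ is well defined, and the maximum-gap bound yields
\[ i^2 \Delta \leq t_{k_i} < (i^2 + 1)\Delta, \]
from which $k_1 < k_2 < \ldots < k_N$ follows since $(i+1)^2 \geq i^2 + 2 > i^2 + 1$. Because $\{B_{t_1} > 0, \ldots, B_{t_n} > 0\} \subset \{B_{t_{k_1}} > 0, \ldots, B_{t_{k_N}} > 0\}$, it suffices to bound the probability of the latter event.

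For the comparison step, set $X_i := B_{t_{k_i}}/\sqrt{t_{k_i}}$ and $Y_i := B_i / \sqrt{i}$, both standard normal. Their covariances are $\Cov(X_i, X_j) = \sqrt{t_{k_i}/t_{k_j}}$ and $\Cov(Y_i, Y_j) = \sqrt{i/j}$ for $i < j$. Using $t_{k_i} < (i^2+1)\Delta$ and $t_{k_j} \geq j^2 \Delta$, the Slepian hypothesis $\Cov(X_i, X_j) \leq \Cov(Y_i, Y_j)$ reduces, after squaring, to $j(i^2 + 1) \leq i j^2$, equivalently $i(j-i) \geq 1$, which holds for all $1 \leq i < j$. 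Since the vectors are centered Gaussians, Slepian's lemma (in its orthant form, obtained from the usual one-sided version by exploiting $-X_i \stackrel{d}{=} X_i$) yields
\[ \PPP(B_{t_{k_1}} > 0, \ldots, B_{t_{k_N}} > 0) \leq \PPP(B_1 > 0, \ldots, B_N > 0), \]
and combining this with the subset inclusion above finishes the argument.

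The main obstacle is identifying the correct subsequence. The quadratic spacing $t_{k_i} \approx i^2 \Delta$ is precisely what Slepian demands: it ensures $\sqrt{t_{k_i}/t_{k_j}} \approx i/j$, which is smaller than the equispaced correlation $\sqrt{i/j}$. Because only $\sqrt{t_n/\Delta}$ quadratically spaced points fit inside $[0, t_n]$, the same quantity shows up as the upper bound on $N$; any slower subsequence growth would violate Slepian, and any faster would exit $[0, t_n]$.
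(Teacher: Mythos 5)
Your proof is correct, and it takes a genuinely different route from the paper's. The paper constructs the target grid $\{h, 2h, \ldots, Nh\}$ from $\{t_1, \ldots, t_n\}$ by iterating explicit grid transformations (deleting a point, translating the whole grid to the right, contracting time beyond a chosen index), each shown by a direct conditioning argument to increase the orthant probability $\PPP(B_{t_1}>0,\ldots,B_{t_n}>0)$; the construction then needs an inductive lower bound $p_k \geq 1/k$ on the cumulative contraction factor to guarantee the transformed grid keeps reaching past $(k+1)h$. You instead select a subsequence at quadratic spacing $t_{k_i}\in[i^2\Delta,\,(i^2+1)\Delta)$, drop the remaining constraints (which only enlarges the event), and compare the resulting $N\times N$ correlation matrix to the equispaced one via Slepian's inequality; after normalizing to unit variances the covariance hypothesis reduces to $j(i^2+1)\leq ij^2$, i.e.\ $i(j-i)\geq 1$, which holds for all $1\leq i<j$, and the orthant form of Slepian at threshold $0$ (using symmetry of the centered Gaussians) closes the argument. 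Both routes are valid. The paper's is self-contained (no appeal to Gaussian comparison inequalities) and its grid transformations are reused elsewhere (Transformation T2 appears independently in the proof of Lemma~\ref{lem:BM_LB}); yours is shorter, and it makes the square root in $N_n$ conceptually transparent: quadratic spacing of the selected points is exactly what the covariance comparison $\sqrt{t_{k_i}/t_{k_j}}\leq\sqrt{i/j}$ demands, and only on the order of $\sqrt{t_n/\Delta}$ such points fit in $[0,t_n]$.
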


A proof of this lemma is provided in Section \ref{s:proofs}. Lemma \ref{lem_feller} applied to the upper bound in Lemma~\ref{lemma2_me} yields a simple upper bound for $\PPP\big(B_{t_1} > 0, \ldots, B_{t_n} > 0\big)$ for any choice of $t_0 = 0 < t_1 < t_2 < \ldots < t_n < \infty$. In our case, after applying Lemma \ref{lem:BM_LB} we have to control probabilities of the type $\PPP\big( B_{t_j-t_{j-1}} < 0,\ldots, B_{t_n-t_{j-1}} < 0 \big)$, and thus we need a lower bound on \[\frac{t_n-t_{j-1}}{\max_{k=j,\ldots,n}(t_k-t_{k-1})},\] which we give in the following lemma.

\begin{lemma}\label{lemming} For the grid in (\ref{THE_grid1}), for $k>j$, $b>0$ and $n\in\N$ we have:
\begin{align*}
\textnormal{(a)} \ \ \ \ \frac{t_n^n(b) - t_j^n(b)}{t_k^n(b) - t_j^n(b)} \geq \frac{\log n - \log j}{\log k - \log j}.
\end{align*}
and when additionally $b\geq\sqrt{3}$ we have
\begin{align*}
\textnormal{(b)} \ \ \ \ \max_{k=j,\ldots,n} (t^n_{k}(b)-t^n_{k-1}(b)) = t^n_{j}(b)-t^n_{j-1}(b)
\end{align*}
\end{lemma}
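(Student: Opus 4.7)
My plan is to treat $t^n_k(b)$ as a smooth function of the continuous variable $k \in (0,n]$, via $t(k) := b^2/\psi(k)^2$ with $\psi(k) := \Phi^{-1}\!\bigl((k/n)\Phi(-b)\bigr) < 0$. Implicit differentiation of $\Phi(\psi(k)) = (k/n)\Phi(-b)$ gives $\psi'(k) = \Phi(\psi(k))/(k\,\phi(\psi(k)))$, and substituting this into $t'(k) = -2b^2\psi'(k)/\psi(k)^3$ and setting $x := -\psi(k) = b/\sqrt{t(k)} > 0$ yields the key identity
$$t'(k) \;=\; \frac{2b^2\,M(x)}{k\,x^3}, \qquad \text{where } M(x) := \Phi(-x)/\phi(x) \text{ is Mills' ratio.}$$
This one formula is the engine for both parts.

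For part (a), observe that the inequality to be proved is the classical three-chord inequality applied to the points $\log j < \log k < \log n$ on the graph of $u \mapsto g(u) := t(e^u)$; so it suffices to show that $g$ is convex on $(0,\log n]$. Convexity of $g$ is equivalent to $g'(u) = k\,t'(k) = 2b^2\, M(x)/x^3$ being non-decreasing in $k$. Since $x = -\psi(k)$ is strictly decreasing in $k$, this reduces to showing that $y \mapsto M(y)/y^3$ is non-increasing on $(0,\infty)$; this is immediate because $M$ is positive and strictly decreasing (by the identity $M'(y) = yM(y) - 1$ together with the standard Mills bound $M(y) < 1/y$), and $1/y^3$ is positive and decreasing. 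The three-chord inequality for $g$ then delivers part (a).

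For part (b), it is enough to prove that $t$ is concave in $k$ on $(0,n]$ when $b \geq \sqrt{3}$: concavity makes $\Delta_k := t^n_k(b) - t^n_{k-1}(b)$ non-increasing in $k$, so that $\max_{k=j,\ldots,n}\Delta_k = \Delta_j$. Differentiating the formula for $t'(k)$ once more, using $x'(k) = -M(x)/k$ (an immediate consequence of the expression for $\psi'$) together with $M'(x) = xM(x)-1$, the cross-terms cancel and a short computation gives
$$t''(k) \;=\; \frac{2b^2\,(3 - x^2)\,M(x)^2}{k^2\,x^4}\,.$$
Since $t(k) \leq t(n) = 1$ throughout $(0,n]$, we have $x = b/\sqrt{t(k)} \geq b \geq \sqrt{3}$, so $3 - x^2 \leq 0$ and hence $t''(k) \leq 0$; this yields the desired concavity and thus part (b).

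The main obstacle is performing the $t''(k)$ calculation cleanly enough that the factor $3 - x^2$ emerges transparently, since this is precisely what fixes the hypothesis $b \geq \sqrt{3}$; everything else reduces to elementary monotonicity properties of Mills' ratio and the textbook three-chord characterisation of convexity.
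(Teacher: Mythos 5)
Your proposal is correct, and the Mills'-ratio engine $t'(k) = 2b^2\,M(x)/(k x^3)$ with $x = b/\sqrt{t(k)}$ is a clean way to organize both parts. For part (a) your route genuinely differs from the paper's: the paper reformulates the inequality as monotonicity of $t\mapsto (1-t)/\bigl(\log\Phi(-b)-\log\Phi(-b/\sqrt t)\bigr)$ (Result B.IX), whose proof involves two auxiliary derivative computations and both monotonicity facts about $M(y)$ and $yM(y)$; your approach recognizes the same inequality as the three-chord inequality for the convex function $u\mapsto t(e^u)$, and the needed monotonicity of $g'(u)=2b^2M(x)/x^3$ drops out from the single elementary fact that $M(y)/y^3$ is decreasing. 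This is shorter and more transparent. For part (b), the paper argues directly that the hitting-time density $s\mapsto \frac{b}{\sqrt{2\pi}}s^{-3/2}e^{-b^2/(2s)}$ is increasing on $[0,1]$ once $b\geq\sqrt3$, and since the grid puts equal mass in each cell the cell widths must decrease; your computation of $t''(k) = 2b^2(3-x^2)M(x)^2/(k^2x^4)$ is correct (the cross-terms do cancel as claimed) and establishes concavity of $t(\cdot)$. These two (b)-arguments are essentially equivalent once one notices $t'(k) = 1/(n f(t(k)))$ with $f$ the conditional density, so $t$ concave $\Leftrightarrow$ $f$ increasing; the paper's version avoids the second differentiation and is a bit lighter, but yours has the advantage of flowing from the same $t'(k)$ formula used in (a). One small slip: the domain of $g(u)=t(e^u)$ is $(-\infty,\log n]$, not $(0,\log n]$, though only $[0,\log n]$ is used for integer $j\geq 1$; this does not affect the argument.
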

Lemma \ref{lemming} is proven in Section \ref{s:proofs}. The lower bound in part (a) of Lemma \ref{lemming} is in fact \[\lim_{b\to\infty}\frac{1 - t_j^n(b)}{t_k^n(b) - t_j^n(b)}.\] With these lemmas we can prove Theorem \ref{THEorem}.

\begin{proof}[Proof of Theorem \ref{THEorem}] Part (a) of Theorem \ref{thm_eq} states that for any choice of $b_0$ there exists positive $C_1$ such that $\beta_n(b) \leq C_1n^{-1/2}$ for $b\leq b_0$ and thus also $\beta_n(b) \leq C_1n^{-1/4}$. Without the loss of generality, from now on we assume that $b>b_0 = \sqrt{3}$. Fix $n\in\N$ and denote $t_k := t^n_k(b)$ for notational simplicity. After combining the general upper bound from Lemma \ref{lem:BM_LB} with the equivalent definition (\ref{THE_grid2}) of the \red{threshold-dependent} grid (\ref{THE_grid1}) we obtain
\begin{align*}
\beta_n(b) & \leq \sum_{j=1}^n a_j(b)w_j(b) = \frac{1}{n} \sum_{j=1}^n a_j(b);
\end{align*}
observe that in our setting $w_n(b)=\frac{1}{n}.$
Moreover, Lemma \ref{lemma2_me} yields (recalling the definition of $a_n(b)$)
\begin{align*}
\beta_n(b) \leq \frac{1}{2n} + \frac{1}{n} \sum_{j=2}^{n-1} \PPP\big( B_1 > 0, \ldots, B_{N_n(j)} > 0 \big) + \frac{1}{2n},
\end{align*}
where \[N_n(j) := \left[ \left(\frac{t_n^n(b) - t_{j-1}^n(b)}{\max_{k\geq j}|t^n_k(b) - t^n_{k-1}(b)|}\right)^{1/2}\right].\] Combining Lemma \ref{lem_feller} with Lemma \ref{lemming} gives
\begin{align*}
\beta_n(b) \leq \frac{1}{n} + C\frac{1}{n} \sum_{j=2}^{n-1} \widetilde{N}_n(j)^{-1/4}, \ \ \text{ where } \ \widetilde{N}_n(j) :=  \frac{\log n - \log (j-1)}{\log j - \log (j-1)}
\end{align*}
with a constant $C>0$ that is independent of $n$ and $b$. Notice that $\widetilde{N}_n(j)$ does not depend on $b$. For $b>b_0$ we thus obtain
\begin{align}
\nonumber \beta_n(b) & \leq \frac{1}{n} + C\,n^{-1} \sum_{j=2}^{n-1} \left(\frac{\log j-\log(j-1)}{\log n-\log(j-1)}\right)^{1/4} \\
\nonumber & = \frac{1}{n} + C\,n^{-1}  \sum_{j=2}^{n-1} \left(\frac{\log\big(1 + \frac{1}{j-1}\big)}{\log\big(\frac{n}{j-1}\big)}\right)^{1/4} \\
\label{log_nierownosc} & \leq \frac{1}{n} + C\,\,n^{-1/4}  \sum_{j=2}^{n-1}\frac{1}{n}\left(\frac{\frac{n}{j-1}}{\log\big(\frac{n}{j-1}\big)}\right)^{1/4} \\
\label{rieman_b0big} & \leq \frac{1}{n} + C\,\,n^{-1/4}  \int_0^1 \left( \frac{1}{-x\log x} \right)^{1/4}\,dx \\
\nonumber & \leq C\,n^{-1/4}
\end{align}
where $C$ is a constant, independent from $n$ and $b$, that might differ from line to line. In (\ref{log_nierownosc}) we use the inequality $\log(1+x)\leq x$ and in (\ref{rieman_b0big}) we use the convergence of the Riemann sum to the integral. This concludes the proof of Theorem \ref{THEorem}.
\end{proof}

\red{
\begin{remark}{\rm
For the purpose of showing that for any confidence level $\alpha$ and bias $\varepsilon$, \red{see also (\ref{confidence}),} the `equiprobable' grid (as defined through (\ref{THE_grid1})) requires a   computational effort that is bounded in $b$, it suffices that the decay of the upper bound for $\beta_n(b)$ in Theorem \ref{THEorem} is of order $n^{-1/4}$; \red{see Corollary \ref{cor:strongly_efficient} in Section \ref{s:algorithm}}.
As an aside we remark that we hypothesize that this decay is actually of order $n^{-1/2}$. This is supported by numerical experiments; see Figure $\ref{fig:relative_bias_M=1}$ where plots of $\beta_n(b)$ versus $n$ are shown for the \red{threshold-dependent} grid (\ref{THE_grid1}). The step we expect to be `loose', \red{in obtaining the bound of Theorem  \ref{THEorem},} is the one corresponding to Lemma \ref{lemma2_me}.
We conjecture that Lemma \ref{lemma2_me} is valid with
\[N_n:=\frac{t_n}{\max_{k=1,\ldots,n}(t_k - t_{k-1})}.\]
(i.e.,  without the square root), which suffices to yield the $n^{-1/2}$ decay of $\beta_n(b)$. }
\label{remark1}
\end{remark}
}
\begin{figure}[h]
 \begin{adjustwidth}{0cm}{}
        \centering
       \includegraphics[scale=.5]{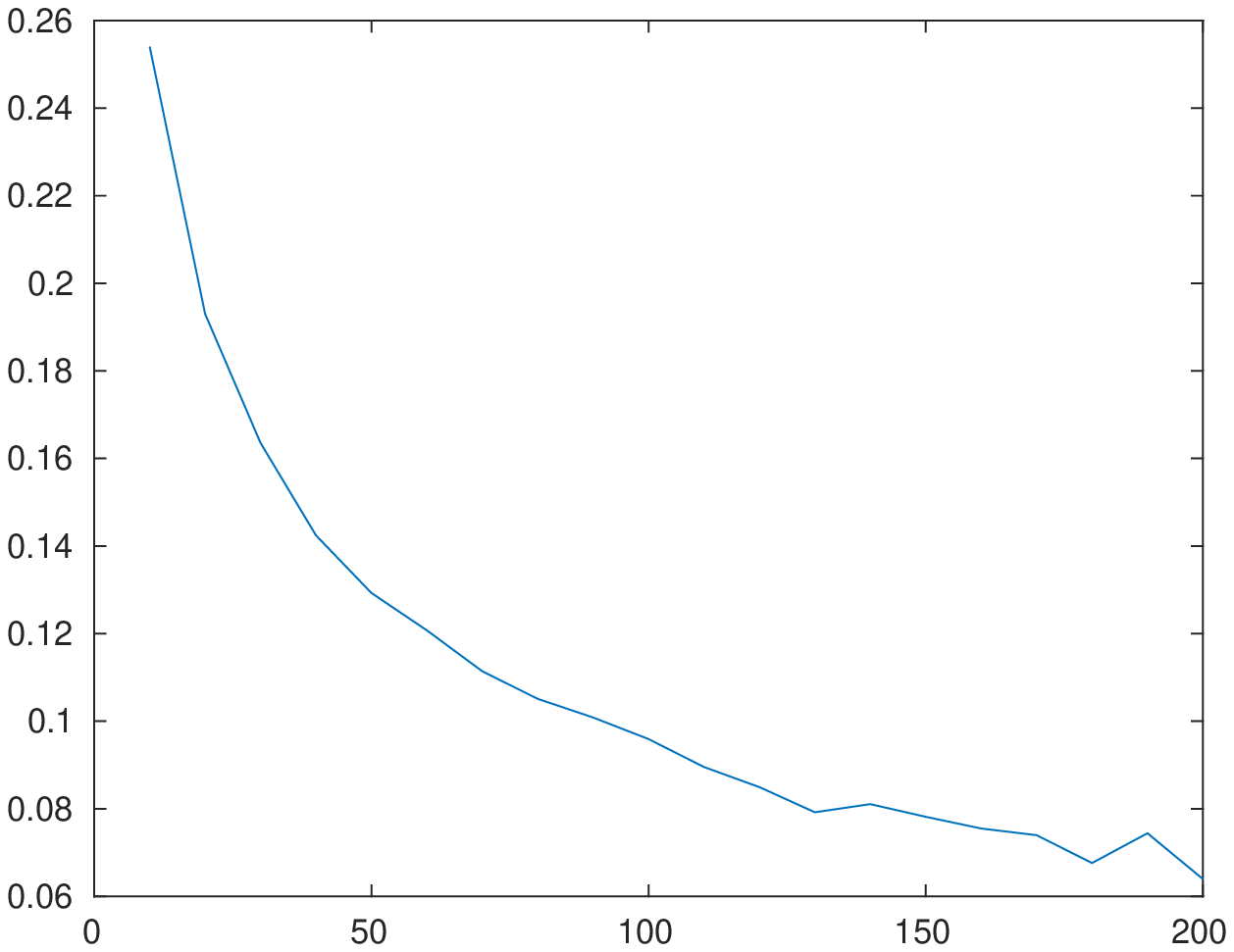}
       \includegraphics[scale=.5]{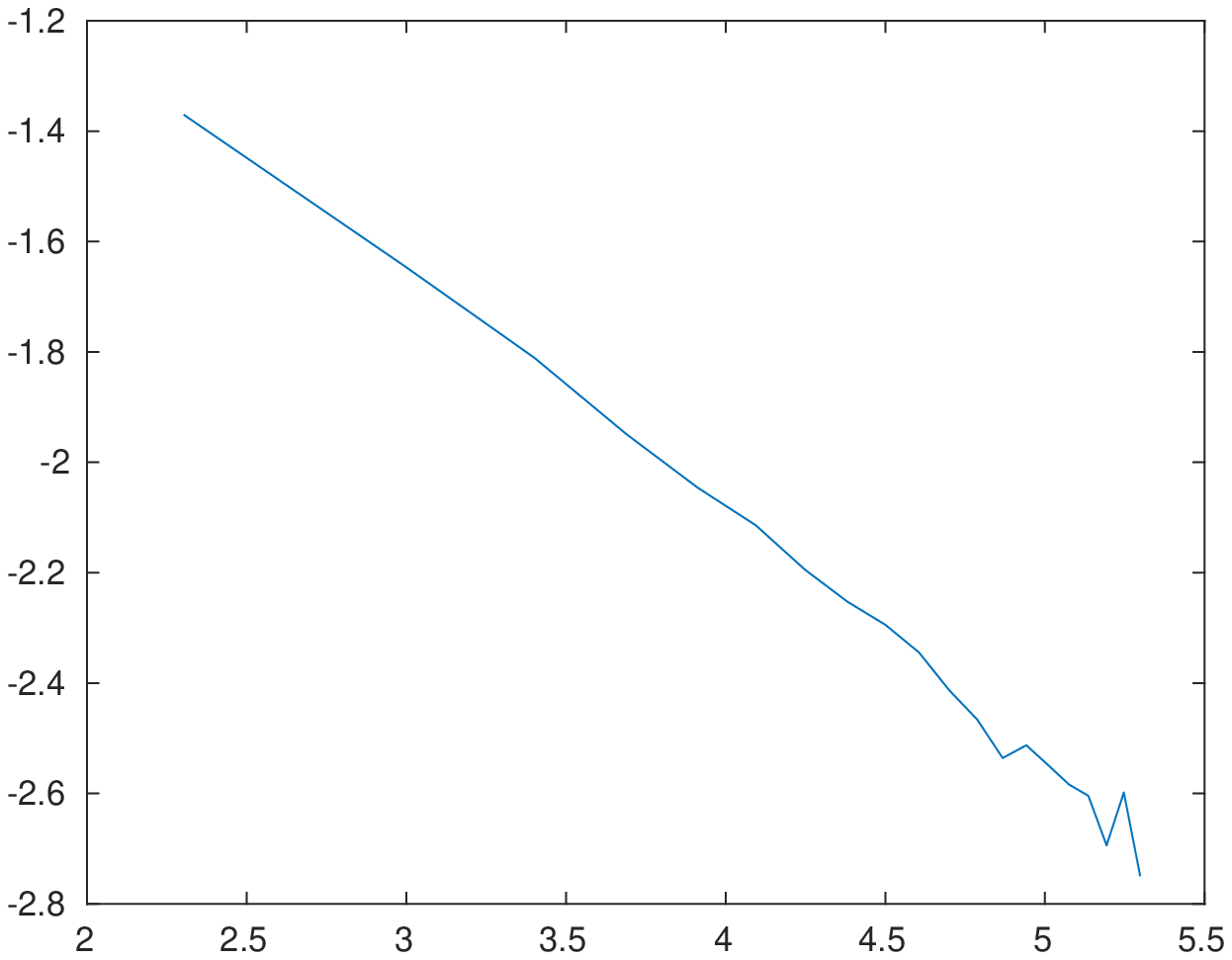}
      \caption{Relative bias $\beta_n(b)$ versus grid size $n$ for the \red{threshold-dependent} grid (\ref{THE_grid1}). The threshold is fixed at $b=3$. The right panel shows a loglog plot, left panel a linear plot. The results suggest that $\beta_n(b)$ decays proportionally to $n^{-1/2}$ rather than $n^{-1/4}$ \red{(see also Remark \ref{remark1})}.
      }\label{fig:relative_bias_M=1}
       \end{adjustwidth}
\end{figure}


\section{Numerical algorithm for estimation of $w(b)$}\label{s:algorithm}

As mentioned in the introduction, the family of \red{threshold-dependent} grids (\ref{THE_grid1}) can be used to construct a strongly efficient algorithm for estimation of $w(b)$, see Corollary \ref{cor:strongly_efficient} below. In this paper, by `strongly efficient' we mean that for any given accuracy $\varepsilon>0$ and confidence level $\alpha>0$ the computational time of an estimator $\widehat{w}(b)$ for $w(b)$ that satisfies
\begin{align}
\PPP\left( \left| \frac{\widehat{w}(b)}{w(b) }-1 \right| > \varepsilon \right) < \alpha
\label{confidence}
\end{align}
is bounded independently of the threshold $b$.\\

In all numerical experiments throughout this paper we used an algorithm developed by \cite{adler2012efficient}, see Algorithm \ref{alg:blanchet} below. Although it is applicable for estimation of quantities such as $\PPP(\max_{i\in\{1,\ldots,n\}} X_i > b)$, where $X\in\R^n$ is normally distributed with an arbitrary positive-definite covariance matrix, we present their algorithm for the specific case of Brownian Motion, as considered in this paper.
\begin{algorithm}[\cite{adler2012efficient}]\label{alg:blanchet}
Choose a threshold $b$ and a finite grid $T = \{t_1, \ldots, t_n\} \in [0,1]$. The estimator $\widehat{w}_T(b)$, computed according to the following algorithm, is an unbiased estimator of $w_T(b)$.
\begin{enumerate}
\item Generate a random time $\tau$ on the grid, i.e. $\tau \in T$, according to the law \[\PPP(\tau = t_k) = \frac{\PPP(B_{t_k}>b)}{\sum_{j=1}^n \PPP(B_{t_j} > b)}.\]
\item Generate $B_\tau$ under the condition $B_\tau > b$.
\item Generate a discrete path of the Brownian Motion $(B_{t_1}, \ldots, B_{t_n})$ conditioned on the pair $(\tau, B_\tau)$ generated in the previous steps.
\item Compute \[\widehat{w}_T(b) := \frac{\sum_{j=1}^n \PPP(B_{t_j} > b)} {\sum_{j=1}^n \ind(B_{t_j} > b)}.\]
\end{enumerate}
\end{algorithm}
\cite{adler2012efficient} prove that the Algorithm \ref{alg:blanchet} gives an \emph{unbiased} estimator of $w_T(b)$ (not of $w(b)$) and that for a fixed $T$ (independent of $b$), the relative variance $\Var(\widehat{w}_T(b))/w_T^2(b) \to 0$, as $b\to\infty$. The authors also propose an estimator for $w(b)$, which relies on a \textit{random discretization}. However, with growing $b$, one needs increasingly many random grid-points in order to control the relative bias, therefore the continuous-time algorithm \textit{is not} strongly efficient. In order to reduce the sampling error one generates multiple replicas of the estimator and takes their average. Since every replica is based on a different grid, one must repeatedly calculate the Cholesky decomposition (whose computational time is cubic in the number of grid-points) in order to sample discrete Gaussian paths in Step 3 of Algorithm \ref{alg:blanchet}. Choosing a predefined grid speeds up this computation, as in that case the Cholesky decomposition has to be performed \emph{only once}, making its computational cost negligible.\\

Combining the \red{threshold-dependent} grids as proposed in Section \ref{s:BM_bb} with Algorithm \ref{alg:blanchet} yields a strongly efficient estimator for $w(b)$ which is given in the corollary below.

\begin{corollary}[Strongly efficient algorithm for the estimation of $w(b)$]\label{cor:strongly_efficient} Fix an accuracy $\varepsilon>0$ and a confidence level $\alpha>0$. Choose a grid $T := T_n(b)$ from the family of grids defined in $(\ref{THE_grid1})$ such that $\beta_T(b) := \beta_n(b) < \varepsilon$ \red{for all $b>0$} (this is possible due to the result in Theorem \ref{THEorem}). Let $\widehat{w}^{(1)}_{T}(b), \ldots \widehat{w}^{(N)}_{T}(b)$ be i.i.d copies of the estimator from Algorithm \ref{alg:blanchet}, with \[N \geq \frac{n^2}{\alpha (\varepsilon-\beta_T(b))^2}.\] Then \[\widehat{w}(b) := \frac{1}{N}\sum_{i=1}^N \widehat{w}^{(i)}_T(b)\] satisfies
\begin{align}\label{to_prove:strong_eff}
\PPP\left( \left| \frac{\widehat{w}(b)}{w(b) }-1 \right| > \varepsilon \right) < \alpha,
\end{align}
and the computational effort to simulate $\widehat{w}(b)$ is bounded independently of $b$.
\end{corollary}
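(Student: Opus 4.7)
The plan is to split the total error $\widehat w(b)/w(b) - 1$ into a sampling part and a deterministic bias part, control the sampling part via Chebyshev, and check that the required number of replicas $N$ together with the fixed grid size $n$ stays bounded in $b$.

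First I would write, using that Algorithm \ref{alg:blanchet} is unbiased for $w_T(b)$ so $\Exp\widehat w(b)=w_T(b)$,
\begin{align*}
\frac{\widehat w(b)}{w(b)}-1 \;=\;\frac{\widehat w(b)-w_T(b)}{w(b)}\;-\;\beta_T(b).
\end{align*}
Since the grid is chosen with $\beta_T(b)<\varepsilon$, a triangle-inequality step shows that the event $\{|\widehat w(b)/w(b)-1|>\varepsilon\}$ is contained in $\{|\widehat w(b)-w_T(b)|>(\varepsilon-\beta_T(b))\,w(b)\}$. By Chebyshev, the probability of the latter is at most $\Var(\widehat w(b))/((\varepsilon-\beta_T(b))^2 w(b)^2)=\Var(\widehat w_T(b))/(N(\varepsilon-\beta_T(b))^2w(b)^2)$, using independence of the $N$ replicas.

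The key estimate I would need is a deterministic upper bound on the estimator, from which the relative variance can be controlled. Inspecting step~4 of Algorithm \ref{alg:blanchet}, the numerator is $\sum_{j=1}^n \PPP(B_{t_j}>b)\le n\,w_T(b)\le n\,w(b)$ (each marginal is dominated by the supremum probability), and the denominator is at least $1$ because by construction at least one indicator is active. Hence $\widehat w_T(b)\le n\,w(b)$ almost surely, which gives $\Var(\widehat w_T(b))\le \Exp[\widehat w_T(b)^2]\le n^2\,w(b)^2$. Substituting into the Chebyshev bound yields
\begin{align*}
\PPP\!\left(\left|\frac{\widehat w(b)}{w(b)}-1\right|>\varepsilon\right)\;\le\;\frac{n^{2}}{N(\varepsilon-\beta_T(b))^{2}},
\end{align*}
which is at most $\alpha$ precisely under the stated condition $N\ge n^2/(\alpha(\varepsilon-\beta_T(b))^2)$. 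This establishes (\ref{to_prove:strong_eff}).

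For the strong-efficiency claim, I would invoke Theorem \ref{THEorem}: by choosing $n$ large enough (depending only on $\varepsilon$, not $b$) so that $C n^{-1/4}\le \varepsilon/2$, we obtain $\varepsilon-\beta_T(b)\ge \varepsilon/2$ uniformly in $b$, so $N$ may be taken as $\lceil 4n^2/(\alpha\varepsilon^2)\rceil$, a quantity independent of $b$. The per-replica cost of Algorithm \ref{alg:blanchet} is $O(n^2)$ once the Cholesky factor of the $n\times n$ covariance is pre-computed (at a one-off $O(n^3)$ cost, since the grid is fixed ahead of the simulation), so the total work is $O(n^3+Nn^{2})$, which is bounded in $b$. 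I do not expect any real obstacle here; the only delicate point is recognising the uniform bound $\widehat w_T(b)\le n\,w(b)$, which is what prevents us from needing any quantitative form of the relative-variance statement of \cite{adler2012efficient} and gives the clean factor $n^2$ in the sample-size prescription.
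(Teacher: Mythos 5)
Your proof is correct and follows essentially the same route as the paper's: decompose the error into a sampling part and a deterministic bias $\beta_T(b)$, use the almost-sure bound $\widehat w_T(b)\le\sum_j\PPP(B_{t_j}>b)\le n\,w_T(b)$ (from the denominator in Step~4 being at least~$1$) to get relative second moment at most $n^2$, apply Chebyshev, and invoke Theorem~\ref{THEorem} to keep $n$ and hence $N$ bounded in $b$. The only cosmetic difference is that the paper normalises by $w_T(b)$ (writing the bound as $\bigl(\sum_j\PPP(B_{t_j}>b)/\max_j\PPP(B_{t_j}>b)\bigr)^2\le n^2$) whereas you normalise directly by $w(b)$; the two are interchangeable since $w_T(b)\le w(b)$, and your added remarks on the $O(n^3+Nn^2)$ cost correctly reflect the paper's discussion of Algorithm~\ref{alg:doucet}.
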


\begin{proof}
First notice that since $\beta_T(b)$ is uniformly bounded in $b$ (see Theorem \ref{THEorem}), so that $N$ is fixed independently of $b$, it follows that $\widehat{w}(b)$ can be computed in bounded time, independently of $b$. It remains to prove that $\widehat{w}(b)$ satisfies the strong efficiency property (\ref{to_prove:strong_eff}). Note that $\widehat{w}(b)$ is an unbiased estimator of $w_T(b)$, not of $w(b)$. The relative variance of $\widehat{w}(b)$ with respect to $w_T(b)$ can be bounded independently of $b$ for an arbitrary choice of the grid in terms of the grid size $n$,
\begin{align*}
\frac{\Var(\widehat{w}_T(b))}{(w_T(b))^2} \leq \frac{\Exp(\widehat{w}_T(b))^2}{(w_T(b))^2} \leq \left( \frac{\sum_{j=1}^n \PPP(B_{t_j} > b)}{\max_{j\in\{1,\ldots,n\}} \PPP(B_{t_j} > b)}\right)^2 \leq n^2.
\end{align*}
Due to Chebyshev's inequality,
\begin{align*}
\PPP\left( \left| \frac{\widehat{w}(b)}{w(b)} - 1 \right| > \varepsilon \right) & = \PPP\left( \left| \frac{\widehat{w}(b) - w_T(b)}{w(b)} + \frac{w_T(b) - w(b)}{w(b)} \right| > \varepsilon \right) \leq \PPP\left( \left| \frac{\widehat{w}(b) - w_T(b)}{w(b)} \right| > \varepsilon - \beta_T(b) \right) \\
& \leq \frac{\Var(\widehat{w}(b))}{(\varepsilon-\beta_T(b))^2 (w(b))^2}  = \frac{1}{N} \cdot \frac{\Var(\widehat{w}_T(b))}{(\varepsilon-\beta_T(b))^2 (w(b))^2} \\
& \leq \frac{1}{N} \cdot \frac{n^2}{(\varepsilon-\beta_T(b))^2} \leq \alpha.
\end{align*}
This concludes the proof.
\end{proof}

We conclude this section by a remark on the simulation of the conditioned Brownian Motion in Step 3 of Algorithm \ref{alg:blanchet}. The na\"{\i}ve method would be to construct the covariance matrix of the conditioned process, calculate the Cholesky decomposition of that matrix (cubic in the number of grid points) and then simulate the process in a standard manner. Notice that this step must be repeated for every replica $\widehat{w}^{(i)}_T(b)$ and thus its computational cost scales with the number of samples. The following algorithm, which can be found e.g. in \cite{doucet2010note}, requires only a single calculation of the Cholesky decomposition for all replicas.

\begin{algorithm}[\cite{doucet2010note}]\label{alg:doucet}
Let $X = (X_1, X_2)^T \in \R^n$, where $X_1 \in \R^{n-1}$ and $X_2 \in \R$, be normally distributed with mean $\mu$ and covariance matrix $\Sigma$,
\begin{align*}
& \mu = \begin{pmatrix}
\mu_1\\
\mu_2
\end{pmatrix}, \text{ where } \mu_1\in\R^{n-1} \text{ and } \mu_2\in\R \, , \\
& \Sigma = \begin{pmatrix}
\Sigma_{11} & \Sigma_{12} \\
\Sigma_{12}^T & \Sigma_{22}
\end{pmatrix}, \text{ where } \Sigma_{11}\in\R^{(n-1)\times (n-1)}, \Sigma_{12}\in\R^{n-1} \text{ and } \Sigma_{22}\in\R.
\end{align*}
The following algorithm generates a sample $\overline{X} \sim (X_1 | X_2 = x_2)$:
\begin{enumerate}
\item Sample $Z = (X_1, X_2)^T \sim N(\mu,\Sigma)$
\item Compute $\overline{X} = X_1 + \Sigma_{12}\Sigma_{22}^{-1}(x_2 - X_2)$.
\end{enumerate}
\end{algorithm}

Note that the computational effort to produce the conditioned Gaussian random variable $\overline{X}$ in Step 2 of Algorithm \ref{alg:doucet} is linear in the dimension $n$. Thus, this algorithm significantly reduces the computation time of Step 3 of Algorithm \ref{alg:blanchet} when that step is repeated for each replica.


\red{
\section{Efficient grids for a broad class of  stochastic processes}\label{s:application}

In this section we discuss how the idea of threshold-dependent grids can be applied to  stochastic processes other than Brownian Motion.
We let $(X_t)_{t\in[0,1]}$ be a real-valued stochastic process and $ t^*(b) := \argmax_{t\in[0,1]} \PPP(X_t > b)$. 
For simplicity we here assume that $t\mapsto\PPP(X_t>b)$ is  {continuous and  strictly increasing} so that $t^*(b)=1$ (but situations in which $t^*(b)\in(0,1)$ can be dealt \red{with similarly, see also the discussion in Section \ref{s:discussion}}). 

As argued in the previous sections, it is efficient to let the position of the grid points  depend on $b$. We constructed  for Brownian Motion a grid by finding $T(b)=\{t_1(b),\ldots,t_n(b)\}$ such that
\begin{equation}\label{G1}{\mathbb P}\big( \tau_b\in({0},t_k(b)]\,|\, \tau_b\in(0,1]\big) =\frac{k}{n};\end{equation}
\red{cf.\ (\ref{THE_grid3}).} An inherent problem is that the class of processes for
which the distribution of $\tau_b$ is known is very limited, so that the approach does not seem to be useful for relevant stochastic processes other than Brownian motion. We saw, however, that for Brownian Motion the $t_k(b)$ satisfying (\ref{G1}) also solve 
\begin{equation}\label{G2}\frac{{\mathbb P}(X_{t_k(b)}>b)}{{\mathbb P}(X_1>b)}=\frac{k}{n};\end{equation}
\red{cf.\ (\ref{THE_grid4})}. The idea now is to use the level-dependent (or: `equiprobable') grid (\ref{G2}) for general real-valued processes. The \red{major} advantage of the grid (\ref{G2}) is that to calculate the position of the grid points $t_k$ the sole prerequisite is that the process' {\it marginals} are known (rather than the distribution of $\tau_b$). In addition, even if the marginal distributions of $X_t$ are not available, but  the {\it asymptotics} of $\mathbb{P}(X_t>b)$ (as $b\to\infty$) are, then  a good approximation of this grid can be found. 
(In the sequel we write, for brevity, $T=\{t_1,\ldots,t_n\}$ instead of $T(b)=\{t_1(b),\ldots,t_n(b)\}$) and $t^*$ instead of $t^*(b).$)

\vb

We now provide the rationale behind the grid (\ref{G2}).  Let $T$ be a grid such that $t^*\in T$. Evidently, by the union bound, \[\PPP(X_{t^*} > b) \leq w_T(b) \leq \sum_{t\in T} \PPP(X_t>b)\]
Now notice that if the grid $T$ is such that for $t\in T\setminus \{t^*\}$
\begin{align}\label{def:as_no_contr}
\PPP(X_t>b) = o\big(\PPP(X_{t^*}>b)\big), \ \ \text{ as } b\to\infty
\end{align}
then it does not make sense to include the point $t$ \red{for large $b$}. Property (\ref{def:as_no_contr}) clearly compromises the performance of equidistant grids as $b\to\infty.$
Considering however the grid points $t_k$ of the \red{threshold}-dependent grid, as defined by (\ref{G2}), these will by design not experience (\ref{def:as_no_contr}).


\vb

To assess the performance of the above threshold-dependent grid (\ref{G2}), we introduce a measure of performance closely related to the relative bias. Note that 
when no formulas for $w(b)$ are available, nor it is known how to reliably approximate $w(b)$, we cannot determine the exact value of the relative bias.
We now make the following two observations. (1) As $w_T(b)<w(b)$ for any choice of $T$, the larger $w_T(b)$ is, the better; if $w_{T_1}(b)>w_{T_2}(b)$ for grids $T_1,T_2$, then also $\beta_{T_1}(b)<\beta_{T_2}(b)$. (2) The crude lower bound $w(b)\geq\PPP(X_{t^*}>b)$  provides us with a useful benchmark. Combining these two thoughts motivates  the following performance measure  of a grid $T$:
\begin{align*}
\gamma_T(b) := \frac{w_T(b)}{\PPP(X_{t^*}>b)}
\end{align*}
Notice that for any $T$ such that $t^*\in T$ we have \[\gamma_T(b) \in\bigg[1,\frac{w(b)}{\PPP(X_{t^*}>b)}\bigg].\] What is more, for any two grids $T_1,T_2$ we have $\gamma_{T_1}(b) \geq \gamma_{T_2}(b)$ if and only if $\beta_{T_1}(b) \leq \beta_{T_2}(b)$; this means that the bigger the $\gamma_T(b)$ is, the better. \red{As our main aim is to efficiently approximate $w(b)$ using discrete-time approximations $w_T(b)$, we see that if $\gamma_T(b)\approx 1$ then there is little gain from using $w_T(b)$ over a deterministic estimator $\PPP(X_{t^*}>b)$.}

\vb

In a series of examples we compare $\gamma_T(b)$ induced by (i)  the threshold-dependent (equiprobable) grid and (ii) the equidistant grid of the same size; we consistently use  $n=100$ grid points. In all cases $t\mapsto\PPP(X_t>b)$ is a continuous, strictly increasing function (so that $t^*=1$). The most important conclusion is that the experiments below uniformly indicate that the equiprobable grid outperforms the equidistant one, not only in the asymptotic regime, as threshold $b$ grows large, but already for moderate values of $b$. This shows how the ideas the we developed earlier this paper, that have provable optimality properties for Brownian motion, lead to an efficient estimation procedure for a much broader class of stochastic processes. 
In all examples, we observe that $\gamma_T(b)$ induced by the equidistant grid  converges to $1$, \red{thus the corresponding $w_T(b)$ is asymptotically equivalent to $\PPP(X_{t^*}>b)$, as $b\to\infty$.}

\begin{example}[Brownian Motion with jumps] {\em 
Let $(X_t)_{t\in[0,1]}$ be a Brownian Motion with jumps, i.e.
\begin{align}\label{def:bm_with_jumps}
X_t := B_t + N_t,
\end{align}
where $B_t$ is a standard Brownian Motion and $N_t$ is a standard Poisson process with intensity $\lambda=1$.

Even though there are no closed-form expressions for $w(b)$, it is still possible to generate exact samples from $\sup_{t\in[0,1]} X_t$ (see \cite[Section 10.1]{dkebicki2015queues}). We can use this to construct an unbiased estimator of $w(b)$ and thus can estimate the relative bias of the tested grids. The results in Figure \ref{fig:bm_poiss_bias} show the substantial gain achieved by the level-dependent grid. The graphs look  similar to those of Brownian Motion, which is indicative of  the threshold-dependent grid having a uniformly bounded relative bias.}
\begin{figure}[h]
  \centering
  \begin{subfigure}{.5\textwidth}
    \centering
    \includegraphics[width=1.00\linewidth]{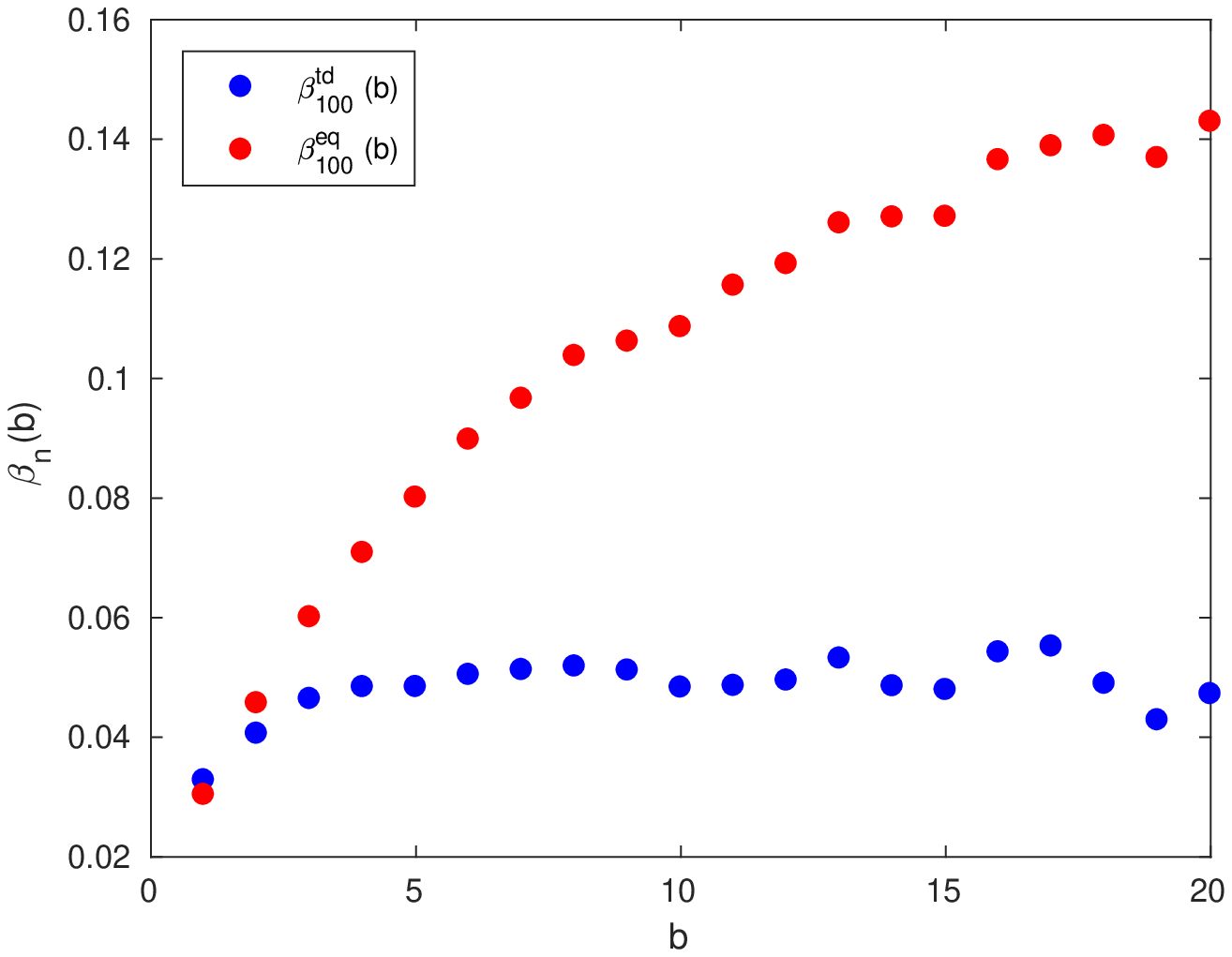}
  \end{subfigure}\hfill
  \begin{subfigure}{.5\textwidth}
    \centering
    \includegraphics[width=1.00\linewidth]{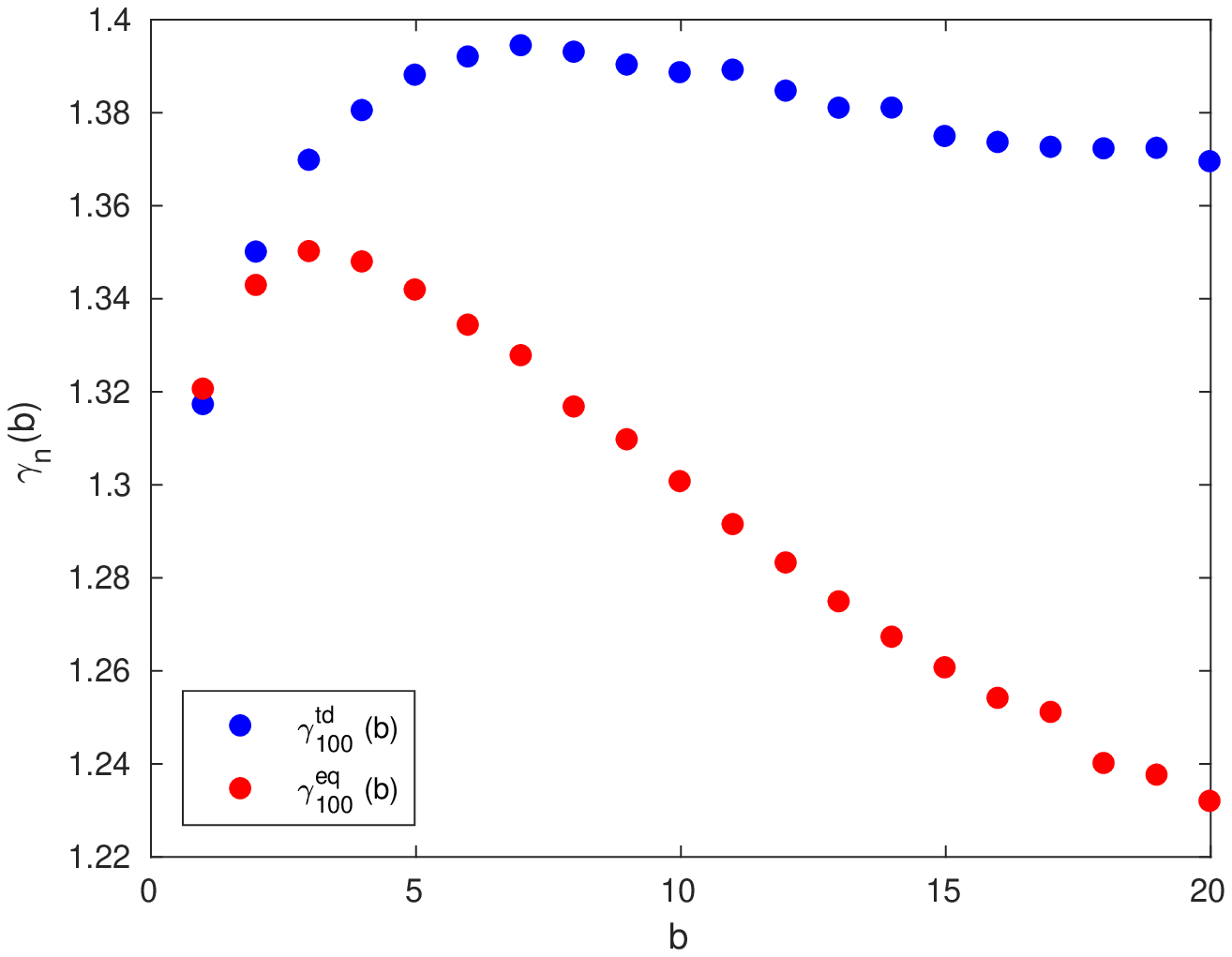}
  \end{subfigure}
    \caption{Brownian Motion with jumps. Plots of $\beta_n(b)$ (left) and $\gamma_n(b)$ (right) as a function of the threshold $b$ for threshold-dependent and equidistant grids of size $n=100$.
      }\label{fig:bm_poiss_bias}
\end{figure}

\end{example}

\begin{example}[Ornstein-Uhlenbeck Process]\label{ex:num_exp2}{\em 
Let $(X_t)_{t\in[0,1]}$ be an Ornstein-Uhlenbeck process, i.e., a strong solution to the following SDE: with $X_0=0$,
\begin{align*}
dX_t &\,= -X_t\,dt + dW_t.
\end{align*}
Then $(X_t)_{t\in[0,1]}$ is a zero-mean Markovian Gaussian process with covariance function \[c(s,t) := {\mathbb C}{\rm ov}(X_s,X_t) = \frac{1}{2} \left( e^{-|t-s|} - e^{-(t+s)} \right).\] \red{The exact value of $w(b)$ is known only in terms of special functions, see \cite{alili2005representations} and it is not straightforwardly evaluated. However, the exact asymptotics of $w(b)$, as $b$ grows large, {\it are} known:}
\begin{align*}
w(b) = C\, \PPP(X_1>b) (1+o(1)), \text{ as } b\to\infty
\end{align*}
where $C$ is a positive constant independent of $b$, \red{see e.g.\ \cite[Theorem 5.1]{dkebicki2003exact} or the original theorem by \cite{piterbarg1978asymptotic}}; this explains why for the level-dependent grid $\gamma_n(b)$ goes to a constant in Figure \ref{fig:ou_bias}. Again the equidistant grid is significantly outperformed by the threshold-dependent grid.}

\begin{figure}[h]
  \centering
  \begin{subfigure}{.75\textwidth}
    \centering
    \includegraphics[height=0.7\linewidth, width=1\linewidth]{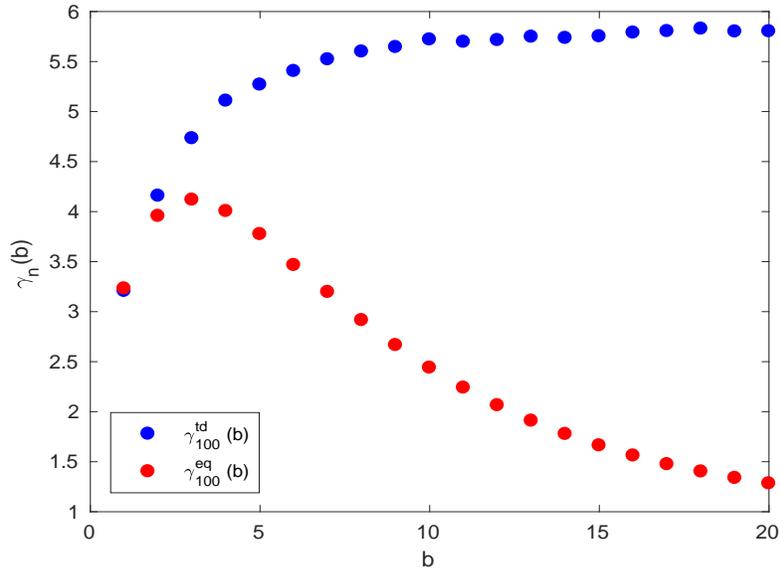}
  \end{subfigure}
    \caption{Ornstein-Uhlenbeck process. Plot  of $\gamma_n(b)$ as a function of the threshold $b$ for threshold-dependent and equidistant grids of size $n=100$. Notice that with growing $b$ the equidistant estimator converges to $\PPP(X_1>b)$. 
      }\label{fig:ou_bias}
\end{figure}

\end{example}

\begin{example}[Fractional Brownian Motion]\label{ex:num_exp3}{\em 
Let $(X_t)_{t\in[0,1]}$ be a fractional Brownian Motion (fBM) with a Hurst parameter $H\in(0,1)$, that is a zero-mean Gaussian process with the covariance function
\begin{align*}
C_H(s,t) := {\mathbb C}{\rm ov}(X_s,X_t) = \frac{1}{2}\left(s^{2H} + t^{2H} - |t-s|^{2H}\right).
\end{align*}
Observe that fBM with Hurst parameter $H=1/2$ is a standard Brownian Motion. For any $H$ we have $C_H(t,t) = t^{2H}$ (strictly increasing variance in time) and thus $t^* = 1$.\\

\red{The exact value of the probability $w(b)$ for $H\neq 1/2$ remains unknown. However, like in Example \ref{ex:num_exp2}, the exact asymptotics of $w(b)$ are known:}
\begin{align*}
w(b) = \begin{cases}
C_H b^{1/H-2}\PPP(X_1>b)(1+o(1)), & \text{ for } H\in(0,\frac{1}{2}) \\
\PPP(X_1>b)(1+o(1)), & \text{ for } H\in(\frac{1}{2},1)
\end{cases}
\end{align*}
where $C_H$ is a constant only depending on $H$; \red{we again refer to \cite[Theorem 5.1]{dkebicki2003exact} or the original theorem by \cite{piterbarg1978asymptotic}}. We apply threshold-dependent grids in these two different asymptotic regimes for $H=0.4$ and $H=0.6$, see the results in Figure~\ref{fig:fbm_bias}. Again the threshold-dependent grid performs considerably better. In case $H=0.4$ the above asymptotic result explains why for the level-dependent grid $\gamma_n(b)$ keeps increasing ($w(b)/{\mathbb P}(X_1>b)$ behaves as the increasing function $b^{1/H-2}$). In case $H=0.6$, again using the asymptotic result, 
\red{$\gamma_n(b) \to 1$ as $b$ grows large, both for the equidistant grid and for the threshold-dependent grid (equivalently, the relative bias vanishes for both as $b \to \infty$).  Note however that with the threshold-dependent grid, $\gamma_n(b)$ tends to 1 slower than with the equidistant grid, as can be seen in Figure \ref{fig:fbm_bias} \red{(right panel)}, showing the more favorable performance of the threshold-dependent grid.}
}

\begin{figure}[h]
  \centering
  \begin{subfigure}{.5\textwidth}
    \centering
    \includegraphics[width=1.00\linewidth]{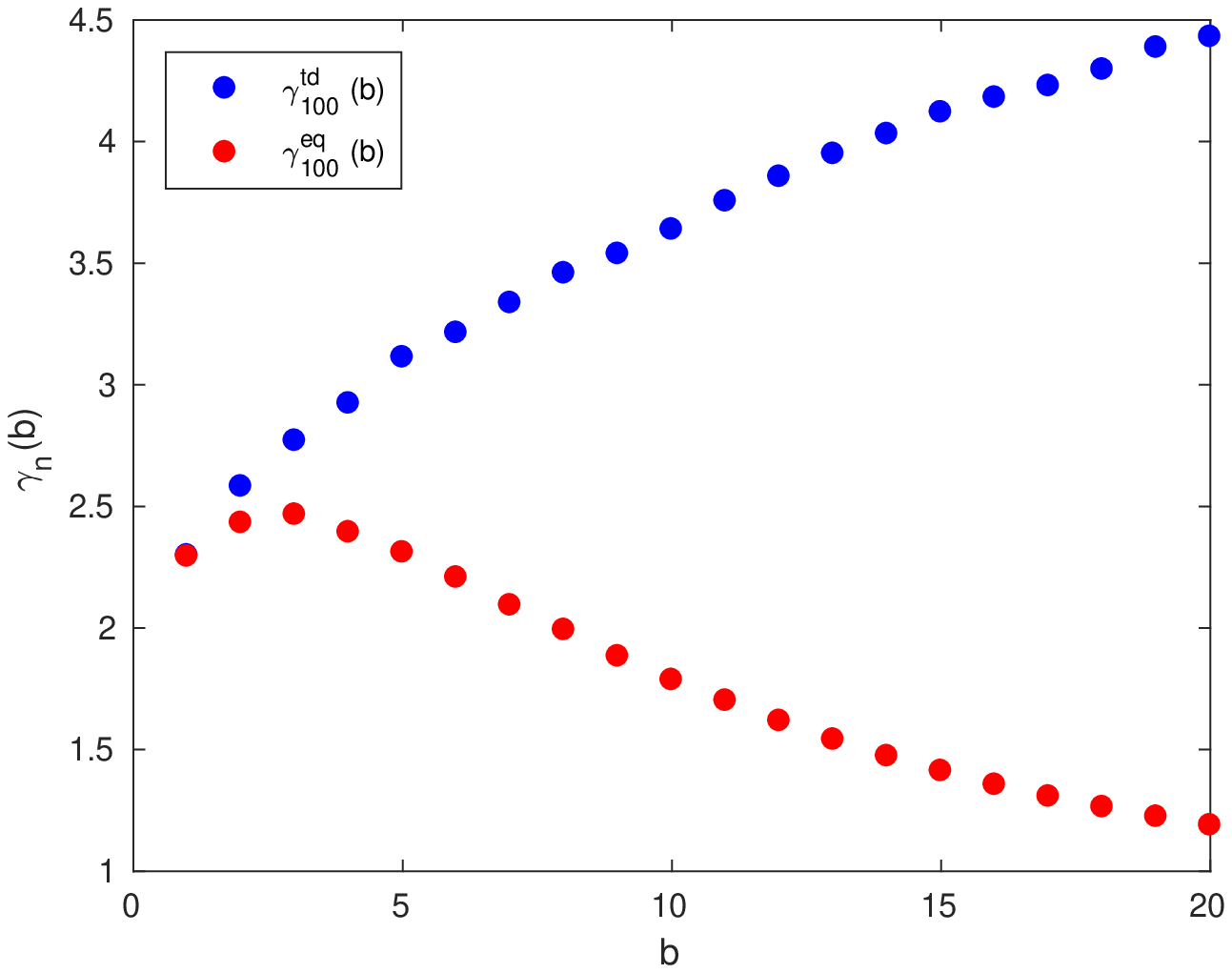}
  \end{subfigure}\hfill
  \begin{subfigure}{.5\textwidth}
    \centering
    \includegraphics[width=1.00\linewidth]{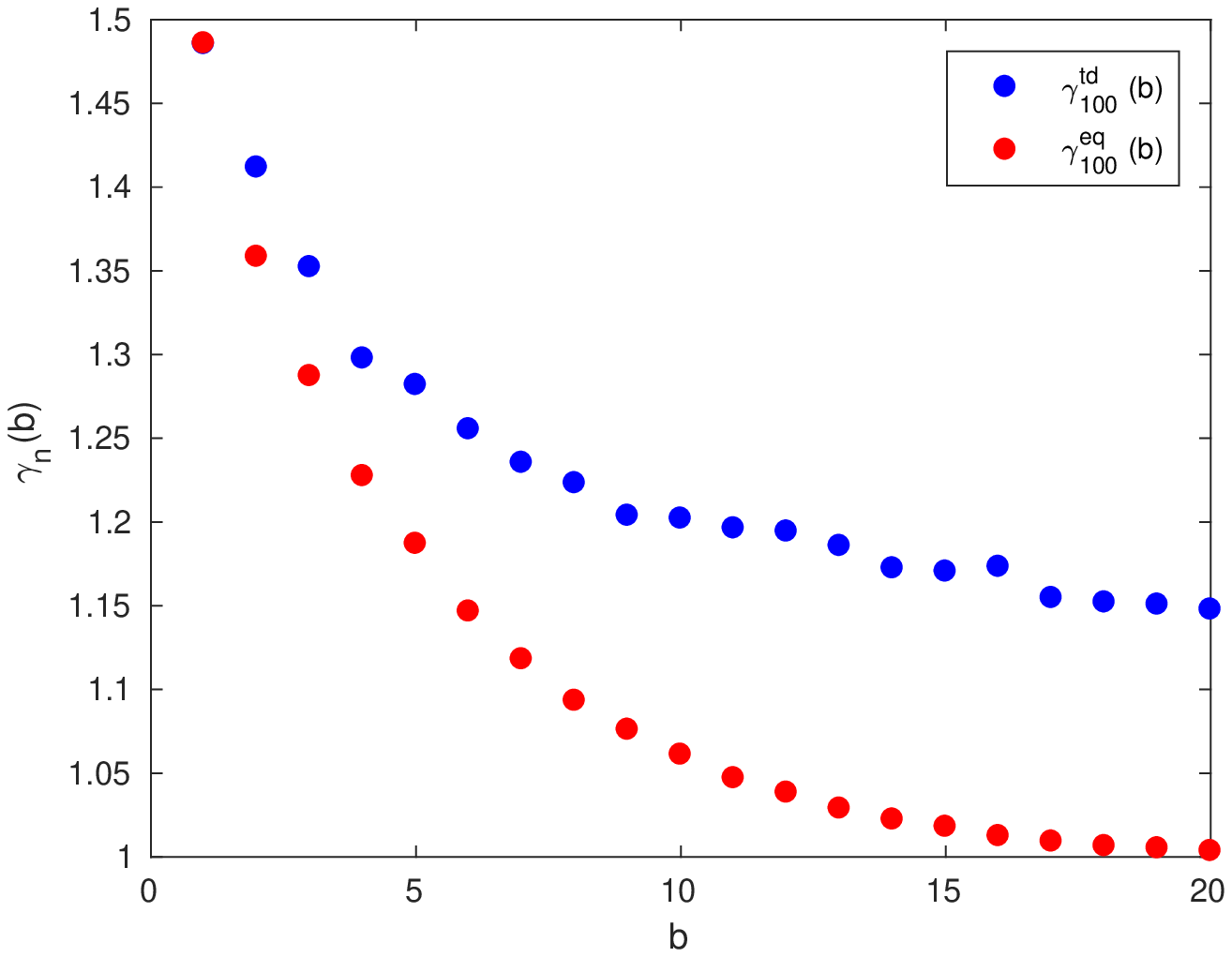}
  \end{subfigure}
    \caption{fBm with Hurst parameter $H=0.4$ (left) and $H=0.6$ (right). Plot of $\gamma_n(b)$ as a function of the threshold $b$ for threshold-dependent and equidistant grids of size $n=100$.
      }\label{fig:fbm_bias}
\end{figure}

\end{example}

}


\section{Concluding remarks and discussion}\label{s:discussion}

In this paper we have demonstrated that the errors due to time discretization when estimating threshold-crossing probabilities $w(b)$ can be significantly reduced  by using other grids than the commonly used equidistant grid. We have analyzed this in considerable detail for the case of standard Brownian Motion. In particular, we have shown that in order to control the error as $b$ grows large, it suffices to properly \emph{shift} the grid points instead of refining the grid with more and more points. At the same time, controlling the error using equidistant grids requires \emph{quadratic} growth of the number of grid points, as $b$ grows large.

\vb

Numerical estimation is evidently not needed for Brownian Motion due to the availability of analytical results. \red{Our paper however indicates that the underlying ideas can be used to construct  efficient grids for a broad class of stochastic processes (notably, \red{L\'evy processes} and Gaussian processes, such as fractional Brownian Motion). The results presented in this paper are intended to develop valuable insight and useful heuristics for tackling the estimation of tail probabilities of these more general classes of processes. \red{We have demonstrated such heuristics for several processes in Section \ref{s:application}. There,} we presented a procedure, that is empirically shown to work well for stochastic process $(X_t)_{t\in[0,1]}$ of which the marginal distributions are known: 
\begin{itemize}
\item[(i)] Identify \[t^*(b) := \argmax_{t\in[0,1]} \PPP(X_t>b);\] in case $(X_t)_{t\in[0,1]}$ is a zero-mean Gaussian process, $t^*$ is a point of maximal variance, i.e., $\argmax_{t\in[0,1]} \Var\, X_t$. As argued, for many key models we have that $t^*=1.$
\item[(ii)] Construct a grid $T = \{t_1,\ldots,t_n\}$ clustered around it, such that $t_k$ solves (\ref{G2}), for $k\in\{1,\ldots,n\}$.
\end{itemize} 
As we pointed out, even if the marginal distribution of $X_t$ is not available but only the corresponding asymptotics, as $b\to\infty$, this procedure can be applied.} It is also noted that it is straightforward to compare two different grids:  the larger the value of $w_T(b)$, the closer it is to the target quantity  $w(b)$.

\vb

A natural question that arises in relation to Theorem \ref{THEorem} is whether we can find a grid that is even better than the one defined in (\ref{THE_grid1}). Constructing an \textit{optimal n-grid} $T^*_n(b)$, i.e. a grid of size $n$ that minimizes the relative bias for a given $b$, remains elusive. However we have been able to find an explicit formula for an optimal 2-grid, namely $T^*_2(b) = \{t^*_1(b), t^*_2(b)\}$, with
\begin{align*}
t^*_1(b) = \frac{\pi b^2}{4} \left( \sqrt{1+\frac{8}{\pi b^2}} - 1\right), \ \ \ \ \text{and} \ \ \ \  t^*_2(b) = 1
\end{align*}
where $\limb \beta_{T^*_2(b)}(b) = 1 - \frac{1}{2} \Phi(\sqrt{2/\pi}) - \frac{1}{4}e^{-1/\pi} \approx 0.4244$. For comparison, the \red{threshold-dependent} grid defined in (\ref{THE_grid1}) yields $\limb \beta_2(b) = \frac{3}{8} + \frac{1}{2} \Phi(-\sqrt{2\log2}) \approx 0.4348$, hence the grid (\ref{THE_grid1}) is not minimizing the bias (although the difference with the optimal 2-grid is small). Additionally, we were able to prove that for an optimal n-grid, $T^*_n(b) = \{t_1^*(b),\ldots,t_n^*(b)\}$, the limits $\limb b^2(1-t^*_k(b))$ must exist, and are all finite and pairwise distinct. As a result we were able to numerically calculate the limit $\limb \beta_{T^*_3(b)} \approx 0.3796$. Finding optimal grids for larger $n$ remains an open problem. We note, however, that with the \red{threshold-dependent} grid we can bound the relative bias uniformly in $b$ (see Theorem \ref{THEorem}) and in this sense the grid (\ref{THE_grid1}) is already (asymptotically) optimal.


\section{Proofs of Lemmas \ref{lem:BM_LB}, \ref{equi_grid_increasing_b}, \ref{lemma2_me}, \ref{lemming} and Proposition \ref{prop:eq}}\label{s:proofs}
\begin{proof}[Proof of Proposition \ref{prop:eq}]
In part (a) of Theorem \ref{thm_eq} it has been proven already  that $\beta_n(b) \leq C_0 bn^{-1/2}$. Thus, when $b$ is fixed it is straightforward that the upper bound in the assertion of the theorem holds.\\
The lower bound developed in Lemma \ref{lem:BM_LB} reads $\beta_n(b) \geq \frac{1}{2} \sum_{j=1}^{n-1} a_{j+1}\cdot w_j(b) + \frac{1}{2}w_n(b)$. Since we have $a_j < a_{j+1}$ for the equidistant grid and all $a_j$ and $w_j$ are non-negative, we may use the weaker inequality \[\beta_n(b) \geq \frac{1}{2}\sum_{j=2}^n a_j\cdot w_j(b).\]
In the following we use Lemma \ref{lem_feller} for a lower bound on terms $a_j$ and Result \ref{appendix:results}.\ref{mvt_taub} for a lower bound on  $w_j$.
\begin{align}
\nonumber \sum_{j=2}^{n} a_{j}\cdot w_j(b) & \geq \frac{b\,(3b + \sqrt{b^2 +8})}{8} \, \sum_{j=2}^{n} C_1^*(n-j+1)^{-1/2}\frac{\sqrt{n}}{j^{3/2}} \, e^{-\frac{b^2}{2} \, \left(\frac{n}{j-1}-1\right)} \\
\nonumber & \geq C \, n^{-1/2} \, \sum_{j=2}^{n} \frac{1}{n} \, \left( \frac{b}{\sqrt{1-\frac{j-1}{n}}} \, \left(\frac{j-1}{n}\right)^{-3/2} e^{-\frac{b^2}{2} \, \left( \frac{n}{j-1} - 1\right)} \right) \\
\label{riemann_sum_prop} & \geq C \, n^{-1/2} \, \int_0^1 \frac{b}{\sqrt{1-x}} \, x^{-3/2} \, e^{-\frac{b^2}{2}\, (1/x - 1)}\,dx \\
\nonumber & \geq C \, n^{-1/2},
\end{align}
where $C$ is a positive constant independent of $n$ (but dependent on $b$) that may vary from line to line. To arrive at (\ref{riemann_sum_prop}) we use the convergence of the Riemann sum, noting that $b$ is fixed and that the function \[f(x) := \frac{b}{\sqrt{1-x}} \, x^{-3/2} \, e^{-\frac{b^2}{2}\, (1/x - 1)}\] is integrable on $(0,1)$. This concludes the proof.
\end{proof}

\begin{proof}[Proof of Lemma \ref{lem:BM_LB}]
Notice that the events $\{\sup_{t\in [0,1]} B_t > b\}$ and $\{\tau_b\in(0,1]\}$ are equivalent. We thus find
\begin{align*}
w(b)\,\beta_T(b) & = \PPP\Big(\sup_{t\in T} B_t < b, \sup_{t\in [0,1]} B_t > b \Big) = \PPP\Big(\sup_{t\in T} B_t < b, \tau_b\in[0,1] \Big) \\
& = \sum_{j=1}^n \PPP\Big(\sup_{t\in \{t_j, \ldots, t_{n}\}} B_t < b, \tau_b\in(t_{j-1},t_j] \Big) \\
& = \sum_{j=1}^n \int_{t_{j-1}}^{t_j} \PPP\Big( \sup_{t\in \{t_j, \ldots, t_{n}\}} B_t < b \mid B_s = b \Big)\,\PPP(\tau_b\in ds) \\
& = \sum_{j=1}^n \int_{t_{j-1}}^{t_j} \PPP\Big( B_{t_j-s} < 0, \ldots, B_{t_n-s} < 0 \Big)\,\PPP(\tau_b\in ds)
\end{align*}
To prove the upper bound we use the fact that $\PPP( B_{t_j-s} < 0, \ldots, B_{t_n-s} < 0)$ is a non-increasing function of $s \in [t_{j-1},t_j]$ (see Appendix \ref{appendix:grid_transformations}, Transformation T\ref{pb}), so that
\begin{align*}
w(b)\,\beta_T(b) & \leq \sum_{j=1}^n \int_{t_{j-1}}^{t_j} \PPP\Big( B_{t_j-t_{j-1}} < 0, \ldots, B_{t_n-t_{j-1}} < 0 \Big)\,\PPP(\tau_b\in ds) \\
& = \sum_{j=1}^n \PPP\Big( B_{t_j-t_{j-1}} < 0, \ldots, B_{t_n-t_{j-1}} < 0 \Big) \cdot \PPP\big(\tau_b\in (t_{j-1},t_j]\big).
\end{align*}
Dividing both sides of the inequality by $w(b) = \PPP(\tau_b\in(0,1])$ gives $\beta_T(b) \leq \bar{\beta}_T(b)$. To prove the lower bound we use Result \ref{appendix:results}.\ref{grid_ineq} from the Appendix, so as to obtain
\begin{align}
\nonumber w(b)\,\beta_T(b) & = \sum_{j=1}^n \int_{t_{j-1}}^{t_j} \PPP\Big( B_{t_j-s} < 0, \ldots, B_{t_n-s} < 0 \Big)\,\PPP(\tau_b\in ds) \\
\label{przejscieLB}  & \geq \sum_{j=1}^{n-1} \int_{t_{j-1}}^{t_j} \frac{1}{2}\,\PPP\Big( B_{t_{j+1}-t_{j}} < 0, \ldots, B_{t_n-t_{j}} < 0 \Big)\,\PPP(\tau_b\in ds) + \frac{1}{2}\,\PPP\big(\tau_b\in (t_{n-1},t_n]\big)  \\
\nonumber & \geq \sum_{j=1}^{n-1} \frac{1}{2}\,\PPP\Big( B_{t_{j+1}-t_j} < 0, \ldots, B_{t_n-t_{j}} < 0 \Big) \cdot \PPP\big(\tau_b\in (t_{j-1},t_j]\big) + \frac{1}{2}\,\PPP\big(\tau_b\in (t_{n-1},t_n]\big).
\end{align}
Dividing both sides of the inequality by $w(b)$ leads to $\beta_T(b) \geq \underbar{$\beta$}_T(b)$ and concludes the proof.
\end{proof}

\begin{proof}[Proof of Lemma \ref{equi_grid_increasing_b}] Recall the definitions of $a_j(b)$ and $w_j(b)$, and $\bar\beta_T(b) := \sum_{j=1}^n a_j(b) w_j(b)$. Notice that if we put $t_k = \frac{k}{n}$, then by the scaling property of Brownian Motion \[a_j(b) = \PPP\big( B_{1} < 0, \ldots, B_{1 + n-j} < 0 \big)\] and thus $a_1 < a_2 < \ldots < a_n$ (since the $a_j(b)$\,s are independent of $b$, we abbreviate $a_j := a_j(b)$).\\
\\
Assume that for any $0 < b_1 < b_2$ there exists $k \in \{1,\ldots,n-1\}$ such that
\begin{align}\label{to_prove_funnylemma2}
w_j(b_1) \geq w_j(b_2), \text{ for } j\leq k \ \ \ \text{ and } \ \ \ w_j(b_1) \leq w_j(b_2), \text{ for } j> k.
\end{align}
Since the weights $w_j(b)$ must satisfy $\sum_{j=1}^n w_j(b) = 1$ we have $\sum_{j=1}^n \big(w_j(b_2) - w_j(b_1)\big) = 0$ and thus \[\sum_{j=k+1}^{n} \big(w_j(b_2) - w_j(b_1)\big) = \sum_{j=1}^{k} \big(w_j(b_1) - w_j(b_2)\big).\] Finally,
\begin{align*}
\bar\beta_T(b_2) - \bar\beta_T(b_1) & = \sum_{j=1}^n a_j \big(w_j(b_2) - w_j(b_1)\big) = \sum_{j=k+1}^n a_j \big(w_j(b_2) - w_j(b_1)\big) - \sum_{j=1}^{k} a_j \big(w_j(b_1) - w_j(b_2)\big) \\
& \geq a_{k+1} \sum_{j=k+1}^n \big(w_j(b_2) - w_j(b_1)\big) - a_{k} \sum_{j=1}^{k} \big(w_j(b_2) - w_j(b_1)\big) \\
& = \big(a_{k+1} - a_k\big) \sum_{j=k+1}^n \big(w_j(b_2) - w_j(b_1)\big) > 0.
\end{align*}
For the remainder of the proof we prove the existence of $k\in\{1,\ldots,n-1\}$ satisfying (\ref{to_prove_funnylemma2}). Let $\tau_b := \inf\{t\geq 0 : B_t \geq b\}$ be the first hitting time of level $b$ and let $f(b,t)$ be the density of $\tau_b$ given that $\tau_b \leq 1$, i.e.,
\begin{align*}
f(b,t) := \frac{b}{2\Phi(-b)}t^{-3/2} \phi\left(-\frac{b}{\sqrt{t}}\right),
\end{align*}
where $b>0$, $t\in(0,1)$, and $\phi(\cdot)$ denotes the density of a standard normal random variable. We will prove that for any $0 < b_1 < b_2$ there exists $t^*$ such that:
\begin{align}\label{funny_property}
f(b_1,t) > f(b_2,t), \text{ for } t\in(0,t^*) \ \ \ \text{ and } \ \ \ f(b_1,t) < f(b_2,t), \text{ for } t\in(t^*,1].
\end{align}
Then the weights \[w_j(b) := \int_{t_{j-1}}^{t_j} f(b,t) \,dt\] are decreasing for all $\frac{j}{n}\leq t^*$, and increasing for all $\frac{j}{n}\geq \frac{1}{n}+t^*$. If $nt^*$ is not an integer, it is not known whether $w_{[t^*n] +1}(b)$ increases or not, but for sure there exists $k \in \{1,\ldots,n-1\}$ satisfying (\ref{to_prove_funnylemma2}). For the remainder we prove the existence of $t^*$ satisfying (\ref{funny_property}). For $t\in(0,1)$:
\begin{align*}
f(b_1,t) - f(b_2,t) & = \frac{1}{2}\,t^{-3/2} \, \left( \frac{b_1\,\phi\left(-\frac{b_1}{\sqrt{t}}\right)}{\Phi(-b_1)}  - \frac{b_2\,\phi\left(-\frac{b_2}{\sqrt{t}}\right)}{\Phi(-b_2)} \right) \\
& = \frac{1}{2}\,t^{-3/2} \frac{b_2\,\phi\left(-\frac{b_2}{\sqrt{t}}\right)}{\Phi(-b_2)} \, \left( \frac{b_1\,\phi\left(-\frac{b_1}{\sqrt{t}}\right)\Phi(-b_2)}{b_2\,\phi\left(-\frac{b_2}{\sqrt{t}}\right)\Phi(-b_1)}  - 1 \right) \\
& = \underbrace{\frac{1}{2}\,t^{-3/2} \frac{b_2\,\phi\left(-\frac{b_2}{\sqrt{t}}\right)}{\Phi(-b_2)}}_{> 0} \, \underbrace{\left( e^{\frac{b_2^2-b_1^2}{2t}}\,\frac{b_1\,\Phi(-b_2)}{b_2\,\Phi(-b_1)}  - 1 \right)}_{=:g(t)}
\end{align*}
Note that $\lim_{t\to0^+}g(t) = + \infty$ and $g(1) < 0$ (for example due to the Result \ref{appendix:results}.\ref{res1} in the Appendix) and that $g(\cdot)$ is strictly decreasing, hence $g(\cdot)$ has exactly one zero $t^*$ and $g(t) > 0$ for $t<t^*$ and $g(t) < 0$ for $t>t^*$. The observation that $\text{sign}(f(b_1,t) - f(b_2,t)) = \text{sign}(g(t))$ concludes the proof.
\end{proof}

\begin{proof}[Proof of Lemma \ref{lemma2_me}]
Let $h := \max_{k=1,\ldots,n}(t_k - t_{k-1})$. We transform the grid $T = \{t_1, \ldots, t_n\}$ with Transformations T\ref{pa}--T\ref{pc}, see Appendix \ref{appendix:grid_transformations}, in such a way that after all transformations we end up with $\{h, \ldots, Nh\}$.
\begin{enumerate}
\item Using Transformation T\ref{pb}, translate the grid to the right by $h-t_1$, i.e., put
\begin{align*}
t_j := t_j + h-t_1 \text { \ \ for all } j\in\{1,\ldots,n\}
\end{align*}
\item Put $\sigma_1 := 1$, $c_1 = 1$ and $k := 2$. While $k \leq N$ do:
\begin{itemize}
\item Put $\sigma_k := \inf\{j : t_j \geq kh\}$.
\item Using Transformation T\ref{pc}, contract the grid after time $t_{\sigma_{k-1}}$ by a factor $c_k$, where $c_k$ is defined by  ${h}/({t_{\sigma_k}-t_{\sigma_{k-1}}})$. Formally, we put
\begin{align*}
t_j := \begin{cases}
t_j, & j \in \{1,\ldots,\sigma_{k-1}\} \\
t_{\sigma_{k-1}} + c_k(t_j - t_{\sigma_{k-1}}), & j \in \{\sigma_{k-1}+1, \ldots, n\}
\end{cases}
\end{align*}
Notice that after this operation $t_{\sigma_k} = kh$.
\item Put $k:=k+1$.
\end{itemize}
\item Using Transformation T\ref{pa}, delete all the points $t_k$ such that $t_k \not\in\{h, \ldots, hN\}$.
\end{enumerate}
Now we prove that the algorithm is well-defined, more precisely, we confirm that all $\sigma_k$'s exist. First, see that $\sigma_1$ is well-defined. By induction, assume that $\sigma_k$ is well-defined and prove that $\sigma_{k+1}$ is well-defined as well. Notice that after the $k$th loop in Step 2 of the algorithm, the distances between the points shrunk at most by a factor $p_k = \prod_{j=1}^k c_j$ compared with the initial maximal distance $h$. Moreover, we observe that
\begin{align}\label{ckplus1}
c_{k} = \frac{h}{t_{\sigma_{k}}-t_{\sigma_{k-1}}} \geq \frac{h}{h + (t_{\sigma_{k}} - t_{\sigma_{k}-1})} \geq \frac{h}{h+\max_{j>\sigma_{k-1}} | t_{j+1} - t_{j}|} \geq \frac{1}{1 + \prod_{j=1}^{k-1} c_j}
\end{align}
We prove by induction that $p_k = \prod_{j=1}^k c_j \geq \frac{1}{k}$ for all $k\in\{1,\ldots,N\}$. Obviously $p_2 = c_2 \geq \frac{1}{2}$. Assume that $p_{k-1} \geq \frac{1}{k-1}$. After multiplying inequality (\ref{ckplus1}) by $p_{k-1}$ we obtain
\begin{align*}
p_{k} \geq \frac{p_{k-1}}{1 + p_{k-1}} = 1 - \frac{1}{1+p_{k-1}} \geq 1 - \frac{1}{1+\frac{1}{k-1}} = \frac{1}{k},
\end{align*}
which ends the inductive proof. Next, in order to show that $\sigma_{k+1}$ is well defined for $k\in\{1, \ldots N-1\}$ it suffices to prove that the endpoint $t_n$, after the $k$th loop of Step 2, is greater than $h(k+1)$. We prove a stronger statement, namely that the endpoint $t_n$ after being shrunk by a factor $p_k$ is still greater than $h(k+1)$, i.e. $h(k+1) \leq t_np_k$. By the definition of $N$, $h$ satisfies the inequality $h \leq {t_n}/{N^2}$, thus
\begin{align*}
h(k+1) \leq \frac{t_n(k+1)}{N^2} = \frac{t_n(k+1)}{N^2p_k}p_k = \frac{k(k+1)}{N^2} t_np_k \leq t_np_k,
\end{align*}
which concludes the proof that $\sigma_{k+1}$ is well-defined. As all transformations used in steps 1-3 satisfy (\ref{non-descreasity_of_n-grid_transformations}) we have  
\begin{align*}
\PPP\big(B_{t_1} > 0, \ldots, B_{t_n} > 0\big) \leq \PPP\big(B_h > 0, \ldots, B_{Nh} > 0\big)
\end{align*}
We finish the proof by observing that $\PPP\big(B_h > 0, \ldots, B_{Nh} > 0\big) = \PPP\big(B_1 > 0, \ldots, B_N > 0\big)$, due to the scaling property of Brownian Motion.
\end{proof}

\begin{proof}[Proof of Lemma \ref{lemming}] Notice that the grid points $t^n_k(b)$ defined in (\ref{THE_grid1}) depend only on the threshold $b$ and the \textit{ratio} $\frac{k}{n} \in [0,1]$. We are able to extend the definition of $t^n_k(b)$ to $t:(0,1]\times(0,\infty)\to[0,1]$,
\begin{align*}
t(s,b) := \Bigg(\frac{b}{\Phi^{-1}\big(s\,\Phi(-b)\big)} \Bigg)^{2}
\end{align*}
such that $t^n_k(b) = t(\frac{k}{n},b)$. Equivalently, $t(s,b)$ can be defined as the unique solution to
\begin{align}\label{general_tsb}
\Phi\left(-\frac{b}{\sqrt{t(s,b)}}\right) = s\,\Phi\left(-b\right)
\end{align}
This extension makes it possible to inspect the derivative of $t^n_k(b)$ with respect to the ratio $\frac{k}{n}$. Using the extension function of $t^n_k(b)$, we aim to prove the more general statement that for $0<s_1<s_2<1$,
\begin{align}\label{tobeequiv}
\frac{1 - t(s_1,b)}{t(s_2,b) - t(s_1,b)} \geq \frac{-\log s_1}{\log s_2 - \log s_1} \ \iff \ \ \frac{1 - t(s_1,b)}{-\log s_1} \leq \frac{1 - t(s_2,b)}{-\log s_2}.
\end{align}
Moreover, using the definition (\ref{general_tsb}) we may substitute \[s = \Phi\Big(-\frac{b}{\sqrt{t(s,b)}}\Big)/\Phi\left(-b\right)\] and arrive at another equivalent form of inequality (\ref{tobeequiv}):
\begin{align}\label{lhsfuntolemming}
\frac{1 - t(s_1,b)}{\log\big( \Phi(-b) \big) - \log\big( \Phi(-b/\sqrt{t(s_1,b)})\big)} \leq \frac{1 - t(s_2,b)}{\log\big( \Phi(-b) \big) - \log\big( \Phi(-b/\sqrt{t(s_2,b)})\big)} \, ,
\end{align}
which is Result \ref{appendix:results}.\ref{superresult} in the Appendix. For part (b) see that the density of the first hitting time,
\begin{align*}
\PPP(\tau_b\in ds) = \frac{b}{\sqrt{2\pi}}s^{-3/2}e^{-b^2/(2s)}\,ds, \text{ \ for } s>0
\end{align*}
is an increasing function on the interval $s\in[0,\frac{b^2}{3}]$ and thus part (b) follows from the second definition of the grid points $t^n_k(b)$ in (\ref{THE_grid2}).
\end{proof}

\appendix

\section{Grid transformations}\label{appendix:grid_transformations}

Let $T = \{t_1, \ldots, t_n\}$ with $0 < t_1 < \ldots < t_n < \infty$. We introduce three \textit{grid transformations}, i.e. operations $T \mapsto \widetilde{T}$ satisfying
\begin{align}\label{non-descreasity_of_n-grid_transformations}
\PPP\big(B_t > 0 \text{ for all } t\in T\big) \leq \PPP\big(B_t > 0 \text{ for all } t\in \widetilde{T}\big).
\end{align}
\begin{enumerate}[(T1)]
\item\label{pa} \textbf{Deleting}. For any $k\in\{1,\ldots,n\}$
\begin{align*}
\PPP\Big(B_{t_1} > 0, \ldots, B_{t_n} > 0\Big) \leq \PPP\Big(B_{t_1} > 0,\ldots,B_{t_{k-1}} > 0,B_{t_{k+1}} > 0,\ldots,B_{t_n} > 0\Big)
\end{align*}
\item\label{pb} \textbf{Translation to the right of the whole sequence}. For any $s>0$
\begin{align*}
\PPP\Big(B_{t_1} > 0, \ldots, B_{t_n} > 0\Big) \leq \PPP\Big(B_{t_1+s} > 0, \ldots, B_{t_n+s} > 0\Big)
\end{align*}
\item\label{pc} \textbf{Contraction of time after some point}. For any $k\in\{1,\ldots,n-1\}$ and $c\in(0,1)$:
\begin{align*}
\PPP\Big(B_{t_1} > 0, \ldots, B_{t_n} > 0\Big) \leq \PPP\Big(B_{t_1} > 0, \ldots,B_{t_k} > 0, B_{t_k + c(t_{k+1}-t_k)} > 0,\ldots,B_{t_k + c(t_n-t_k)} > 0\Big)
\end{align*}
\end{enumerate}

\begin{proof}[Proof that Transformations {\rm T1-T3} satisfy (\ref{non-descreasity_of_n-grid_transformations})]
Assertion T\ref{pa} is straightforward to verify. Observe for T\ref{pb} that
\begin{align*}
\PPP\big(B_{t_1} > 0, \ldots, B_{t_n} > 0\big) & = \int_0^\infty \PPP\big(B_{t_2} > 0, \ldots, B_{t_n} > 0 \mid B_{t_1} = x\big)\frac{1}{\sqrt{2\pi t_1}} e^{-x^2/(2t_1)}\,dx \\
& = \int_0^\infty \PPP\big(B_{t_2-t_1} < x, \ldots, B_{t_n-t_1} < x\big)\frac{1}{\sqrt{2\pi t_1}} e^{-x^2/(2t_1)}\,dx \\
& = \int_0^\infty \PPP\big(B_{t_2-t_1} < y\sqrt{t_1}, \ldots, B_{t_n-t_1} < y\sqrt{t_1} \big)\frac{1}{\sqrt{2\pi}} e^{-y^2/2}\,dy \\
& \leq \int_0^\infty \PPP\big(B_{t_2-t_1} < y\sqrt{t_1+s}, \ldots, B_{t_n-t_1} < y\sqrt{t_1+s} \big)\frac{1}{\sqrt{2\pi}} e^{-y^2/2}\,dy \\
& = \int_0^\infty \PPP\big(B_{t_2-t_1} < x, \ldots, B_{t_n-t_1} < x\big)\frac{1}{\sqrt{2\pi (t_1+s)}} e^{-x^2/(2(t_1+s))}\,dx \\
& = \PPP\big(B_{t_1+s} > 0, \ldots, B_{t_n+s} > 0\big)
\end{align*}
and for T\ref{pc} that
\begin{align*}
& \PPP\big(B_{t_1} > 0, \ldots, B_{t_n} > 0\big) \\
& = \int_0^\infty \PPP\big(B_{t_1} > 0, \ldots, B_{t_{k-1}} > 0 \mid B_{t_k} = x\big)\, \PPP\big(B_{t_{k+1}} > 0, \ldots, B_{t_n} > 0 \mid B_{t_k} = x\big)\frac{1}{\sqrt{2\pi t_k}} e^{-x^2/(2t_k)}\,dx \\
& = \int_0^\infty \PPP\big(B_{t_1} > 0, \ldots, B_{t_{k-1}} > 0 \mid B_{t_k} = x\big)\, \PPP\big(B_{t_{k+1}-t_k} < x, \ldots, B_{t_n-t_k} < x \big)\frac{1}{\sqrt{2\pi t_k}} e^{-x^2/(2t_k)}\,dx \\
& \leq \int_0^\infty \PPP\big(B_{t_1} > 0, \ldots, B_{t_{k-1}} > 0 \mid B_{t_k} = x\big)\, \PPP\left(B_{t_{k+1}-t_k} < \frac{x}{\sqrt{c}}, \ldots, B_{t_n-t_k} < \frac{x}{\sqrt{c}} \right)\frac{1}{\sqrt{2\pi t_k}} e^{-x^2/(2t_k)}\,dx \\
& = \int_0^\infty \PPP\big(B_{t_1} > 0, \ldots, B_{t_{k-1}} > 0 \mid B_{t_k} = x\big)\, \PPP\big(B_{c(t_{k+1}-t_k)} < x, \ldots, B_{c(t_n-t_k)} < x \big)\frac{1}{\sqrt{2\pi t_k}} e^{-x^2/(2t_k)}\,dx \\
& = \PPP\big(B_{t_1} > 0, \ldots,B_{t_k} > 0, B_{t_k + c(t_{k+1}-t_k)} > 0,\ldots,B_{t_k + c(t_n-t_k)} > 0\big)
\end{align*}
\end{proof}

\section{Miscellaneous results}\label{appendix:results}
Let $\Phi(\cdot)$ denote the standard normal cumulative distribution function and $\phi(\cdot)$ the standard normal density function. Below we list various results that we use. Results \ref{appendix:results}.\ref{ineq1}--\ref{appendix:results}.\ref{ineq2} are standard, and not proven here.
\begin{enumerate}[(\ref{appendix:results}.I)]
\item\label{ineq1} For $x>0$:
\begin{align}
\frac{x}{1+x^2} \leq \frac{\Phi(-x)}{\phi(x)} \leq \frac{1}{x}
\end{align}
\item\label{limit1} As $x\to\infty$,
\[\lim_{x\to\infty} \frac{\Phi(-x)}{\frac{1}{x}\phi(x)} \to 1.\]
\item\label{ineq2} \cite{szarek1999nonsymmetric}. For $x>-1$:
\begin{align}
\frac{2}{x + (x^2+4)^{1/2}} \leq \frac{\Phi(-x)}{\phi(x)} \leq \frac{4}{3x + (x^2+8)^{1/2}}
\end{align}
\item\label{grid_ineq} Let $0 < t_1 < \ldots < t_n < \infty$, then:
\begin{align*}
\PPP\Big(B_{t_1} > 0, \ldots, B_{t_n} > 0\Big) \geq \frac{1}{2}\,\PPP\Big(B_{t_2-t_1} > 0, \ldots, B_{t_n-t_1} > 0\Big)
\end{align*}
\item\label{mvt_taub} Let $T = \{t_1, \ldots, t_n\}$, where $t_j := \frac{j}{n}$, $\tau_b := \inf\{t\geq 0 : B_t \geq b\}$ and $b > 0$, then:
\begin{align*}
\PPP\Big(\tau_b\in (t_{j-1},t_j] \ \big| \ \tau_b\in(0,1]\Big) \leq \frac{b\,(b + \sqrt{b^2 +4})}{4} \cdot \frac{\sqrt{n}}{(j-1)^{3/2}} e^{-\frac{b^2}{2} \cdot \left(\frac{n}{j}-1\right)}
\end{align*}
and
\begin{align*}
\PPP\Big(\tau_b\in (t_{j-1},t_j] \ \big| \ \tau_b\in(0,1]\Big) \geq \frac{b\,(3b + \sqrt{b^2 +8})}{8} \cdot \frac{\sqrt{n}}{j^{3/2}} e^{-\frac{b^2}{2} \cdot \left(\frac{n}{j-1}-1\right)}
\end{align*}
for $j \in \{2, \ldots n\}$.
\item\label{res_increasing} Let $f:(0,\infty)\times(0,1)\to(0,\infty)$ such that
\begin{align*}
f(b,x) := \frac{b}{\sqrt{1-x}}x^{-3/2}e^{-\frac{b^2}{2}(1/x-1)}
\end{align*}
Then $f(b,x)$ is an increasing function of $x$, when $b\geq 1$.
\item\label{res1} Let $f:(0,\infty)\to(0,\infty)$ such that
\begin{align*}
f(x) := \frac{\Phi(-x)}{\phi(x)}
\end{align*}
Then $f$ is a strictly decreasing function.
\item\label{res2} Let $f:(0,\infty)\to(0,\infty)$ such that
\begin{align*}
f(x) := \frac{\Phi(-x)}{\frac{1}{x}\phi(x)}
\end{align*}
Then $f$ is a strictly increasing function.
\item\label{superresult} Let $f:[0,1]\to[0,\infty)$ be such that
\begin{align*}
f(t):= \begin{cases}
0, & t=0; \\
\vspace{-4mm}\\
\frac{\displaystyle 1-t}{\displaystyle \log\big( \Phi(-b) \big) - \log\big( \Phi(-b/\sqrt{t})\big)}, & t\in(0,1); \\
\vspace{-4mm}\\
\frac{\displaystyle 2\,\Phi(-b)}{\displaystyle b\,\phi(b)}, & t = 1.
\end{cases}
\end{align*}
Then $f$ is continuous and increasing.
\end{enumerate}


\subsection{Proofs of results \ref{appendix:results}.\ref{grid_ineq}--\ref{appendix:results}.\ref{superresult}}

\begin{proof}[Proof of \ref{appendix:results}.\ref{grid_ineq}] The proof is very similar to the proofs from Appendix \ref{appendix:grid_transformations}. Note that
\begin{align*}
\PPP\Big(B_{t_1} > 0, \ldots, B_{t_n} > 0\Big) & = \int_0^\infty \PPP\Big(B_{t_2} > 0, \ldots, B_{t_n} > 0 \mid B_{t_1} = x \Big)\frac{1}{\sqrt{2\pi t_1}} e^{-x^2/(2t_1)}\,dx \\
& = \int_0^\infty \PPP\Big(B_{t_2-t_1} < x, \ldots, B_{t_n-t_1} < x \Big)\frac{1}{\sqrt{2\pi t_1}} e^{-x^2/(2t_1)}\,dx \\
& \geq \int_0^\infty \PPP\Big(B_{t_2-t_1} < 0, \ldots, B_{t_n-t_1} < 0 \Big)\frac{1}{\sqrt{2\pi t_1}} e^{-x^2/(2t_1)}\,dx \\
& = \frac{1}{2}\,\PPP\Big(B_{t_2-t_1} > 0, \ldots, B_{t_n-t_1} > 0\Big),
\end{align*}
which concludes the proof.
\end{proof}

\begin{proof}[Proof of \ref{appendix:results}.\ref{mvt_taub}]
Using the mean value theorem and monotonicity of $\phi(\cdot)$ on the negative half-line, we have $|\Phi(-x) - \Phi(-y)| \leq |x-y| \cdot \phi(-y)$ for $0<y<x$. Furthermore,
\begin{align*}
\PPP\Big(\tau_b\in (t_{j-1},t_j] \Big) = 2\Phi(-b/\sqrt{t_j}) - 2\Phi(-b/\sqrt{t_{j-1}}) \leq 2\,\left(\frac{b}{\sqrt{t_{j}}} - \frac{b}{\sqrt{t_{j}}}\right) \, \phi\left(\frac{b}{\sqrt{t_{j}}}\right)
\end{align*}
Thus, for $b > 0$ and $j\in\{2,\ldots,n\}$, after substituting $t_j = \frac{j}{k}$, the above combined with the inequality \ref{appendix:results}.\ref{ineq2} yield:
\begin{align*}
\PPP\Big(\tau_b\in (t_{j-1},t_j] \ \big| \ \tau_b\in(0,1]\Big) & \leq \frac{1}{\Phi(-b)}\,\left(\frac{b}{\sqrt{t_{j-1}}} - \frac{b}{\sqrt{t_{j}}}\right) \, \phi\left(\frac{b}{\sqrt{t_{j}}}\right) \\
& = \frac{b \sqrt{n}}{\sqrt{2\pi} \Phi(-b)} \, \frac{\sqrt{j}-\sqrt{j-1}}{\sqrt{(j-1)j}} e^{-b^2n/(2j)} \\
& \leq \frac{b \sqrt{n}}{2\sqrt{2\pi} \Phi(-b)} \, \frac{1}{(j-1)^{3/2}} e^{-b^2n/(2j)} \\
& \leq \frac{b\,(b + \sqrt{b^2 +4})}{4} \, \frac{\sqrt{n}}{(j-1)^{3/2}} e^{-\frac{b^2}{2} \, \left(\frac{n}{j}-1\right)}
\end{align*}
The proof of the second inequality is analogous.
\end{proof}

\begin{proof}[Proof of \ref{appendix:results}.\ref{res_increasing}] It suffices to prove that $\frac{d}{dx}f(b,x) \geq 0$ for $b\geq 1$. See that
\begin{align*}
\frac{d}{dx}f(b,x) & =  be^{b^2/2} \, \frac{d}{dx}\, \frac{e^{-b^2/(2x)}}{\sqrt{1-x}\, x^{3/2}} \\
& = \frac{be^{b^2/2}}{(1-x)x^3} \, \left( \frac{b^2}{2x^2}e^{-b^2/(2x)}\sqrt{1-x}\, x^{3/2} - e^{-b^2/(2x)} \, \left( -\frac{x^{3/2}}{2\sqrt{1-x}} + \frac{3}{2}\sqrt{x(1-x)} \right) \right) \\
& = \frac{be^{b^2/2\,(1-1/x)}}{2(1-x)^{3/2}x^{7/2}} \, \left( b^2(1-x) + x^2 - 3x(1-x) \right) \\
& = \underbrace{\frac{be^{b^2/2\,(1-1/x)}}{2(1-x)^{3/2}x^{7/2}}}_{>0} \, \big( \underbrace{4x^2-(b^2+3)x+b^2}_{=:g(x)} \big)
\end{align*}
Note that $g(x)$ has at most one root when $b\in[1,3]$ thus $g(x) \geq 0$ for $b\in[1,3]$. Moreover, when $b>3$, then $g'(x) = 8x - (b^2+3) < -1$ (for $x\in[0,1]$) thus $g(x)$ is strictly decreasing for $x\in[0,1]$. From the observation that $g(0) = b^2>0$ and $g(1) = 1 >0$ we conclude that $g(x)$ is nonnegative on the interval $[0,1]$ for $b\geq1$ and thus $\frac{d}{dx} f(b,x) \geq 0$, when $b\geq 1$.
\end{proof}

\begin{proof}[Proof of \ref{appendix:results}.\ref{res1}] We have that
\begin{align*}
f'(x) = \frac{-\phi(x) + \Phi(-x)\, x}{\phi(x)},
\end{align*}
thus $f'(x) \leq 0$ iff $-\phi(x) + \Phi(-x)\, x \leq 0$, which is equivalent to Result \ref{appendix:results}.\ref{ineq1}.
\end{proof}

\begin{proof}[Proof of \ref{appendix:results}.\ref{res2}] See that
\begin{align*}
f'(x) = \frac{\Phi(-x) - x\phi(x) + x^2\Phi(-x)}{\phi(x)},
\end{align*}
thus $f'(x) \geq 0$ iff $\frac{\Phi(-x)}{\phi(x)} \geq \frac{x}{1+x^2}$, which is an implication of the lower bound from result \ref{appendix:results}.\ref{ineq2}.
\end{proof}

\begin{proof}[Proof of \ref{appendix:results}.\ref{superresult}] It is easy to see that $\lim_{t\to0^+} f(t) = 0$. To see that $\lim_{t\to 1^-} f(t) = \frac{2\,\Phi(-b)}{b\,\phi(b)}$ we expand $\log\big(\Phi(-b/\sqrt{t})\big)$ in a series around $t_0 = 1$ and obtain
\begin{align*}
\log\big(\Phi(-b/\sqrt{t})\big) = \log\big(\Phi(-b)\big) + \frac{b\,\phi(b)}{2\Phi(-b)}(t-1) + o(t-1)
\end{align*}
Thus
\begin{align*}
\lim_{t\to1^-} f(t) = \lim_{t\to1^-} \frac{1-t}{\frac{b\,\phi(b)}{2\Phi(-b)}(1-t) + o(t-1)} = \frac{2\,\Phi(-b)}{b\,\phi(b)}.
\end{align*}
To prove that $f$ is increasing we study the first derivative. For $t\in(0,1)$:
\begin{align}
\nonumber\frac{d}{dt}\,f(t) & = \frac{- \log\left( \frac{\Phi(-b)}{\Phi(-b/\sqrt{t})}\right) + \frac{(1-t)\Phi(-b/\sqrt{t})\Phi(-b)}{\Phi(-b)} \cdot \frac{bt^{-3/2}}{2\Phi(-b/\sqrt{t})^2}\phi(-b/\sqrt{t}) }{\Big(\log\big( \Phi(-b) \big) - \log\big( \Phi(-b/\sqrt{t})\big)\Big)^2} \\
\label{nominator_line}& = \frac{ \log\left( \frac{\Phi(-b/\sqrt{t})}{\Phi(-b)}\right) + \frac{b(1-t)}{2t^{3/2}} \cdot \frac{\phi(-b/\sqrt{t})}{\Phi(-b/\sqrt{t})} }{\Big(\log\big( \Phi(-b) \big) - \log\big( \Phi(-b/\sqrt{t})\big)\Big)^2}
\end{align}
Due to Result \ref{appendix:results}.\ref{res1} we have the lower bound \[\frac{\phi(-b/\sqrt{t})}{\Phi(-b/\sqrt{t})} \geq \frac{\phi(-b)}{\Phi(-b)}\] and thus the numerator of the fraction in (\ref{nominator_line}) can be bounded from below by the function $g:(0,1)\to\R$ defined as below:
\begin{align*}
g(t) := \log\left( \frac{\Phi(-b/\sqrt{t})}{\Phi(-b)} \right) + \frac{b(1-t)}{2\,t^{3/2}} \, \frac{\phi(b)}{\Phi(-b)}
%
\end{align*}
Notice that $g(t)\geq 0$ implies $\frac{d}{dt}f(t)\geq 0$ which is exactly what we want to establish. For the remainder of the proof we show that $g(t)$ is non-negative. Since $\lim_{t\to0^+}g(t) = +\infty$ and $g(1) = 0$, it suffices to show that $g'(t)$ is monotone (non-increasing). We study the first derivative
\begin{align*}
g'(t) & = \frac{b}{2t^{3/2}} \frac{\phi(b/\sqrt{t})}{\Phi(-b/\sqrt{t})} + \frac{b}{4t^{3/2}} \frac{\phi(b)}{\Phi(-b)} - \frac{3b}{4t^{5/2}} \frac{\phi(b)}{\Phi(-b)} \\
& = \frac{b^2}{4t^2} \left( 2\, \frac{\frac{\sqrt{t}}{b}\phi(b/\sqrt{t})}{\Phi(-b/\sqrt{t})} + \left(t^{1/2} - 3t^{-1/2} \right) \, \frac{\frac{1}{b}\phi(b)}{\Phi(-b)}\right) \\
& \leq \frac{b^2}{4t^2} \left( 2\, \frac{\frac{\sqrt{t}}{b}\phi(b/\sqrt{t})}{\Phi(-b/\sqrt{t})} -2 \, \frac{\frac{1}{b}\phi(b)}{\Phi(-b)}\right) \leq 0,
\end{align*}
where the last inequality is a consequence of the application of Result \ref{appendix:results}.\ref{res2}, that is \[t \longmapsto \frac{\frac{\sqrt{t}}{b}\phi(b/\sqrt{t})}{\Phi(-b/\sqrt{t})}\] is an increasing function of $t$.
\end{proof}

\medskip

\noindent \textbf{Acknowledgments}. 
\red{The authors would like to thank Ankush Agarwal and Johan van Leeuwaarden for useful discussions and suggestions.
In addition, the reviewers' reports helped improving the quality of our work considerably.}
This work is part of the research programme `Rare Event Simulation for Climate Extremes' with grant number 657.014.033, which is (partly) funded by the Netherlands Organisation for Scientific Research (NWO). Michel Mandjes' research is partly funded by the NWO Gravitation Programme NETWORKS, grant number 024.002.003.

\medskip
\bibliographystyle{apalike}
\bibliography{bibliografia_optimal_grids}
\end{document}